\newtheorem{theorem}{Theorem}[section]
\newtheorem{definition}[theorem]{Definition}
\newtheorem{example}[theorem]{Example}
\newtheorem{lemma}[theorem]{Lemma}
\newtheorem{proposition}[theorem]{Proposition}
\newtheorem{remark}[theorem]{Remark}
\newtheorem{remarks}[theorem]{Remarks}
\newenvironment{proof}[1][Proof]{\textbf{#1.} }{\ \rule{0.5em}{0.5em}}
\newcommand{\E}{{\rm \bf E}}
\newcommand{\F}{{\rm \bf F}}
\newcommand{\lcp}{{\rm LCP}}
\newcommand{\prob}{{\rm \bf P}}
\newcommand{\rmd}{{\rm d}}
\newcommand{\supp}{{\rm supp}}
\newcommand{\ep}{\varepsilon}
\newcommand{\R}{\mathbb{R}}
\newcommand{\N}{\mathbb{N}}
\newcommand{\A}{\mathbb{A}}
\newcommand{\dN}{\N}
\newcommand{\dR}{\R}
\newcounter{figurecounter}
\newcounter{figure:table}
\newcounter{figure:table1}
\newcounter{figure:game}
\newcounter{figure:arrows}
\newcounter{figure:loops}
\newcounter{figure:ftv}
\newcounter{figure:ftv:2}
\begin{document}

\title{Absorption Paths and Equilibria\\
     in Quitting Games%
\thanks{
G. Ashkenazi-Golan, I. Krasikov, and E. Solan acknowledge the support of the Israel Science Foundation, grants \#217/17 and \#722/18,
and NSFC-ISF Grant \#2510/17.
G. Ashkenazi-Golan, C. Rainer, and E. Solan acknowledge the support of the COST action 16228, the European Network of Game Theory.}}

\author{Galit Ashkenazi-Golan%
\thanks{The School of Mathematical Sciences, Tel Aviv
University, Tel Aviv 6997800, Israel. e-mail: galit.ashkenazi@gmail.com.},
Ilia Krasikov%
\thanks{National Research University Higher School of Economics, 20 Myasnitskaya Ulitsa, Moscow 101000, Russia. e-mail: krasikovis.main@gmail.com.},
Catherine Rainer,%
\thanks{Univ Brest, UMR CNRS 6205, 6, avenue Victor-le-Gorgeu, B.P. 809, 29285 Brest cedex, France.
e-mail: Catherine.Rainer@univ-brest.fr.}
and Eilon Solan%
\thanks{The School of Mathematical Sciences, Tel Aviv
University, Tel Aviv 6997800, Israel. e-mail: eilons@post.tau.ac.il.}}

\maketitle

\begin{abstract}
We study quitting games
and define the concept of \emph{absorption paths},
which is an alternative definition to strategy profiles that accomodates both discrete time aspects and continuous time aspects,
and is parameterized by the total probability of absorption in past play rather than by time.
We then define the concept of \emph{sequentially 0-perfect} absorption paths,
which are shown to be limits of $\ep$-equilibrium strategy profiles as $\ep$ goes to 0.
We finally identify a class of quitting games that possess sequentially 0-perfect absorption paths.
\end{abstract}

\noindent{\em  Keywords:} Stochastic games, quitting games, linear complementarity problems, Q-matrices, continuous equilibria.

\noindent{\em MSC2020: } 91A06, 91A10, 91A15, 91A20.

\section{Introduction}

Stochastic games were introduced by Shapley (1953)
as a dynamic model, where the players' behavior affects the evolution of the state variable.
Whether every multiplayer stochastic game admits an $\ep$-equilibrium is one of the most difficult open
problems in game theory to date.
Mertens and Neyman (1981) proved that the value exists in two-player zero-sum games,
Vieille (2000a, 2000b) proved that an $\ep$-equilibrium exists in two-player nonzero-sum games,
Solan (1999) extended this result to three-player absorbing games,
and Flesch, Schoenmakers, and Vrieze (2008, 2009) proved the existence of an $\ep$-equilibrium when each player
controls one component of the state variable.

Solan and Vieille (2001) introduced a new class of stochastic games, called \emph{quitting games},
where each player has two actions, continue and quit,
the game terminates once at least one player chooses quit,
and the terminal payoff depends on the set of players who choose to quit at the termination stage.
Solan and Vieille (2001) proved that if the payoff function satisfies a certain condition,
then an $\ep$-equilibrium exists.
Simon (2007, 2012) and Solan and Solan (2020) extended this result to other families of payoff functions.
Though the class of quitting games is simple --
if the game has not terminated by a given stage, then necessarily all players continued so far
-- the analysis of these games is intricate,
the mathematical tools used to study them are diverse,
and include dynamical systems, topological tools, and linear complementarity problems,
and the equilibria these games possess may be complex
(see, Flesch, Thuijsman, and Vrieze (1997), Solan (2003), and Solan and Vieille (2002)).

The main difficulty in studying $\ep$-equilibria in stochastic games
is that the undiscounted payoff is not continuous over the space of strategies,
hence one cannot apply a fixed point theorem to prove the existence of an $\ep$-equilibrium.
In this paper we provide a new representation for strategy profiles in quitting games, termed \emph{absorption paths}.
This representation allows for both discrete-time aspects and continuous-time aspects in the players' behavior.
Moreover, the undiscounted payoff is continuous over the space of absorption paths.
In fact, the space of absorption paths is a compactification of the space of absorbing strategy profiles.

We define the concept of \emph{sequentially 0-perfect} absorption paths,
which are the analog of equilibria in standard strategy profiles.
We then show that limits of $\ep$-equilibria in standard strategy profiles
are sequentially 0-perfect absorption paths,
and that every sequentially 0-perfect absorption path induces an $\ep$-equilibrium in standard strategy profiles,
for every $\ep > 0$.
Finally, using Viability Theory we identify one class of quitting game where sequentially 0-perfect absorption paths exist.

The paper is organized as follows.
The model of quitting games is presented in Section~\ref{section:model},
and the equilibrium concept that we study is presented in Section~\ref{perfectness}.
Absorption paths are presented in Section~\ref{section:ap},
and their application to prove existence of $\ep$-equilibrium
in a certain class of quitting games
is described in Section~\ref{section:continuous}.
Concluding remarks appear in Section~\ref{section:discussion}.

\section{The Model}
\label{section:model}

\begin{definition}
A \emph{quitting game} is a
pair $\Gamma = (I,r)$,
where $I$ is a finite set of \emph{players} and
$r : \prod_{i \in I}\{C^i,Q^i\} \to \dR^I$ is a \emph{payoff function}.
\end{definition}

Player~$i$'s action set is $A^i := \{C^i,Q^i\}$.
These actions
are interpreted as continue and quit, respectively.
Set $A:=\prod_{i \in I} A^i$.
The game is played as follows.
At every stage $n \in \dN$ each player $i \in I$ chooses an action $a^i_n \in A^i$.
If all players continue, the play continues to the next stage;
if at least one player quits, the play terminates, and the terminal payoff is $r(a_n)$,
where $a_n = (a^i_n)_{i \in I}$.
If no player ever quits, the payoff is $r(\vec C)$, where $\vec C := (C^i)_{i \in I}$.

A \emph{mixed strategy profile} is a vector $\xi = (\xi^i)_{i \in I} \in [0,1]^I$,
with the interpretation that $\xi^i$ is the probability with which player~$i$ quits.
The probability of absorption under the mixed action profile $\xi$ is
$p(\xi) := 1-\prod_{i \in I}(1-\xi^i)$.
Extend the absorbing payoff to mixed action profiles that are absorbing with positive probability:
for every $\xi \in [0,1]^I$ such that $\xi \neq \vec 0$,
$r(\xi) := \frac{\sum_{a \in A^*} \xi(a) r(a)}{p(\xi)}$,
where $\xi(a) := \left(\prod_{\{i \colon a^i = Q^i\}} \xi^i\right)\cdot\left(\prod_{\{i \colon a^i = C^i\}} (1-\xi^i)\right)$, for every $a \in A$.

A (behavior) \emph{strategy} of player~$i$ is a function $x^i = (x_n^i)_{n \in \dN} : \dN \to [0,1]$, with the interpretation
that $x_n^i$ is the probability that player~$i$ quits at stage $n$ if the game
did not terminate
before that stage.
A \emph{strategy profile} is a vector $x = (x^i)_{i \in I}$ of strategies, one for each player.

We denote by $A^* := A \setminus \{\vec C\}$ the set of all action profiles in which at least one player quits,
by $A_1^* := \{ (Q^i,C^{-i}), i \in N\}$ the set of all action profiles in which exactly one player quits,
where $C^{-i} := (C^j)_{j \neq i}$,
and by $A_{\geq 2}^* := A^* \setminus A_1$ the set of all action profiles in which at least two players quit.

Given a sequence $(a_n)_{n=1}^N$, which may be finite or infinite,
set $\theta := \min\{n \leq N \colon a_n \in A^*\}$, where the minimum over an empty set is $\infty$.
When finite, $\theta$ is the first stage in which at least one of the players quit.
In this case
let $I_* := \{ i \in I \colon {a_{\theta}^i} = Q^i\}$ be the set of players who quit at the terminal stage.

For every strategy profile $x$,
the probability distribution of the random variable $(\theta,a_\theta)$ is denoted $\prob_x$.
Denote by $\E_x$ the corresponding expectation operator.
A strategy profile $x$ is \emph{absorbing} if $\prob_x(\theta < \infty) = 1$.

The \emph{payoff} under strategy profile $x$ is
\[ \gamma(x) := \E_x\left[\mathbf{1}_{\{\theta < \infty\}} r(a_\theta) + \mathbf{1}_{\{\theta = \infty\}} r(\vec C)\right]. \]
Let $\ep \geq 0$.
A strategy profile $x^*$ is an \emph{$\ep$-equilibrium} if $\gamma^i(x^*) \geq \gamma^i(x^i,x^{*,-i}) - \ep$
for every player $i \in I$ and every strategy $x^i$ of player~$i$.

It is easy to check that every two-player quitting game admits an $\ep$-equilibrium,
for every $\ep > 0$.
Solan (1999) extended this result to three-player quitting games,
see also Flesch, Thuijsman, and Vrieze (1997).
Whether every quitting game admits
an $\ep$-equilibrium for every $\ep > 0$
is an open problem.

\section{Sequential $\ep$-Perfectness}
\label{perfectness}

\subsection{$\ep$-Perfectness in Strategic-Form Games}
\label{section:perfect}

Let $G = (I, (A^i)_{i \in I}, r)$ be a strategic-form game
with set of players $I$, set of actions $A^i$ for each player $i \in I$,
and payoff function $r : A \to \dR^I$,
where $A = \prod_{i \in I} A^i$.

In an $\ep$-equilibrium, no player can profit more than $\ep$ by deviating.
This does not rule out the possibility that a player plays with small probability an action that
generates
 her a low payoff.
This deficiency is taken care of by the following concept,
which requires that a player does not play with positive probability actions that generates her a low payoff.
\begin{definition}
Let $G = (I, (A^i)_{i \in I}, r)$ be a strategic-form game, let
$i \in I$, and let $\xi \in \prod_{i \in I} \Delta(A^i)$ be a mixed action profile.
Player~$i$ is \emph{$\ep$-perfect} at $\xi$ in $G$ if the following conditions hold for every action $a^i \in A^i$:
\begin{eqnarray}
\label{equ:91}
&&r^i(a^i,\xi^{-i}) \leq r^i(\xi) + \ep,\\
\label{equ:92}
&& \xi^i(a^i) > 0 \ \ \ \Longrightarrow \ \ \ r^i(a^i,\xi^{-i}) \geq r^i(\xi) - \ep.
\end{eqnarray}
\end{definition}

Eq.~\eqref{equ:91} means that player~$i$ cannot gain more than $\ep$ by unilaterally altering her action;
Eq.~\eqref{equ:92} demands that player~$i$
cannot lose more than $\ep$ no matter which one of the actions
to which she assigns positive probability is played.

Standard continuity arguments yield that if
player~$i$ is $\ep_{k}$-perfect at a mixed action profile $\xi_{k}$ in the game $G_{k} = (I,r_k)$,
if $(\xi_{k})_{k \in \dN}$ converges to a limit $\xi$,
if $(\ep_{k})_{k \in \dN}$ converges to 0,
and if $r$ is a payoff function that satisfies $r^i = \lim_{k\to \infty} r^i_k$,
then player~$i$ is 0-perfect at $\xi$ in $G = (I,r)$.

\subsection{Sequentially $\ep$-Perfect Players in Quitting Games}

In this section we extend the concept of $\ep$-perfect players to quitting games.
Consider a quitting game $\Gamma = (I,r)$.
For every vector $y \in \dR^I$ let $G_\Gamma(y)$ be the one shot game with set of players $I$,
set of actions $A^i = \{Q^i,C^i\}$ for each player $i \in I$, and payoff function $r_\Gamma$ defined by
\[ r_\Gamma(y;a) := \left\{
\begin{array}{lll}
r(a) & \ \ \ \ \ \ & a \neq \vec C,\\
y & & a = \vec C.
\end{array}
\right. \]

The game $G_\Gamma(y)$ represents one stage of the game $\Gamma$,
when the continuation payoff is $y$.
A strategy profile in $G_\Gamma(y)$ is a vector $\xi \in [0,1]^I$,
with the interpretation that $\xi^i$ is the probability that player~$i$ chooses the action $Q^i$, for each $i \in I$.

We now define the concept of sequential $\ep$-perfectness in quitting games.
For every $n \in \dN$ denote by $\gamma_n(x)$ the expected payoff under $x$,
conditional that the game
did not
terminate in the first $n-1$ stages.%
\footnote{Note that since a strategy $x^i$ is a function from $\dN$ to $[0,1]$,
the conditional probability distribution $\prob_x(\cdot \mid \theta > n)$ is well defined even when $\prob_x(\theta \leq n) = 1$. }
    \[ \gamma_n(x) := \E_x[\mathbf{1}_{\{\theta < \infty\}} r(a_\theta) + \mathbf{1}_{\{\theta = \infty\}} r(\vec C) \mid \theta \geq n]. \]

\begin{definition}
Let $\Gamma$ be a quitting game
and let $i \in I$ be a player.
Player~$i$ is \emph{sequentially $\ep$-perfect} at the strategy profile $x$ in $\Gamma$ if for every $n \in \dN$,
player~$i$ is $\ep$-perfect at the mixed action profile $x_n$ in the strategic-form game $G_\Gamma(\gamma_{n+1}(x))$.
\end{definition}

\begin{remark}
In the strategic-form game $G_{\Gamma}(\gamma_{n+1}(x))$,
when the other players play $x^{-i}_n$,
the payoff of player~$i$ when she plays $x^i_n$ (resp.~$Q^i$, $C^i$) is
$\gamma_n^i(x)$ (resp.~$r^i(Q^i,x^{-i})$, $(1-p(C^i,x^{-i}_n)) \gamma_{n+1}(x) + p(C^i,x^{-i}_n) r^i(C^i,x^{-i}_n)$).
Therefore if player $i$ is $\ep$-perfect at $x_n$ in $G_{\Gamma}(\gamma_{n+1}(x))$,
then in particular
$r^i(Q^i,x_n^{-i})\leq \gamma_n^i(x)+\ep$,
and, if $x_n^i(Q^i)>0$ then
$r^i(Q^i,x_n^{-i})\geq \gamma_n^i(x)-\ep$.
\end{remark}

The following two results relate $\ep$-equilibria to
sequential $\ep$-perfectness in quitting games.

\begin{theorem}[Simon, 2007, Theorem 3 + Solan and Vieille, 2001, Proposition 2.13]
\label{theorem:simon}
Assume that the quitting game $\Gamma$ admits an $\ep$-equilibrium,
for every $\ep > 0$.
Then at least one of the following statements hold.
\begin{enumerate}
\item[(S.1)]   For every $\ep > 0$ sufficiently small the game admits a stationary $\ep$-equilibrium.
\item[(S.2)]   For every $\ep > 0$ sufficiently small the game admits an $\ep$-equilibrium $x$ that has the following structure:
there is a player $i \in I$ who quits with probability 1 at the first stage;
from the second stage and on, all players punish player~$i$ at her min-max level.\footnotemark
\item[(S.3)]   For every $\ep > 0$ sufficiently small there is an absorbing strategy profile $x$
such that all players $i \in I$ are sequentially $\ep$-perfect at $x$.
\end{enumerate}
\end{theorem}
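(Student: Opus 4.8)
The plan is to derive the trichotomy by assembling the two cited structural results; once their statements are in hand there is little left to do beyond checking that their conclusions cover all cases. First I would fix a sequence $\ep_k \downarrow 0$ and, by hypothesis, $\ep_k$-equilibria $x_k$ of $\Gamma$. I would then recall that the structure theory developed by Solan and Vieille (2001) and sharpened by Simon (2007) sorts each $x_k$ --- after harmless modifications that rescale the equilibrium constant by at most a universal factor --- into one of three regimes, determined by the first-stage absorption probability and by which player is responsible for it: (i) a regime in which the $\ep_k$-equilibrium can be taken stationary; (ii) a regime in which a single player quits with probability one at the first stage; and (iii) a regime in which absorption is spread out over time, with no player ever quitting with probability close to one. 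Since an $\ep_k$-equilibrium is also an $\ep$-equilibrium for every $\ep \geq \ep_k$, and sequential $\ep_k$-perfectness likewise implies sequential $\ep$-perfectness, the pigeonhole principle lets me pass to the subsequence on which a single regime occurs, and then produce the corresponding conclusion for every sufficiently small $\ep$.

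Regime (i) is exactly (S.1). For regime (ii), I would observe that when player $i$ quits with probability one at the first stage, termination is forced and the game ends immediately, so the only equilibrium constraint that refers to continuation play is player $i$'s own: in an $\ep_k$-equilibrium this constraint says that player $i$'s payoff from continuing, which is her payoff under the other players' behavior from the second stage on, does not exceed $r^i(Q^i,C^{-i}) + \ep_k$. Up to the slack $\ep_k$ this means that from the second stage on the remaining players hold player $i$ to her min-max value, which is (S.2); that such a punishment can indeed be implemented within an $\ep_k$-equilibrium is part of the content of Solan and Vieille (2001, Proposition 2.13).

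For regime (iii) the plan is to invoke Solan and Vieille (2001, Proposition 2.13) once more to replace $x_k$ by an absorbing $\ep_k'$-equilibrium, with $\ep_k'$ a fixed multiple of $\ep_k$ and hence still tending to $0$, along which at every stage $n$ no player assigns positive probability to an action worth more than $\ep_k'$ less than her continuation value $\gamma_{n+1}(x_k)$ and no action is worth more than $\ep_k'$ more --- that is, $x_k$ is $\ep_k'$-perfect in $G_\Gamma(\gamma_{n+1}(x_k))$ for every $n$, i.e.\ sequentially $\ep_k'$-perfect, which is (S.3). I expect this last step to be the main obstacle: one must check that Proposition 2.13, phrased in the stationary, linear-complementarity framework of Solan and Vieille, really applies to the non-stationary, spread-out profiles produced in regime (iii), and that the perturbation used to delete positive-probability suboptimal quitting costs only $O(\ep_k)$ simultaneously at every stage. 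Controlling the latter requires a grip on how the conditional continuation payoffs $\gamma_{n+1}(x_k)$ move along the play as $n$ grows --- precisely the continuity that is cumbersome in the discrete-time strategy formalism, and that the absorption-path language introduced later in the paper is designed to make transparent.
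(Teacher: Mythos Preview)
The paper does not give a proof of this theorem at all: it is stated purely as a citation of Simon (2007, Theorem~3) together with Solan and Vieille (2001, Proposition~2.13), and the text moves on immediately. So there is no ``paper's proof'' to compare against; your proposal is attempting more than what the paper itself does.

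That said, your plan is of the right shape --- assemble the cited results and check that their conclusions exhaust the possibilities --- and your pigeonhole/subsequence device is the natural glue. A couple of points where the sketch drifts, however. First, your discussion of regime~(ii) conflates two different inequalities: for the profile in (S.2) to be an $\ep$-equilibrium one needs $r^i(Q^i,C^{-i}) \ge v^i - \ep$ (so player~$i$ cannot profitably continue against the punishment), not merely that the original continuation payoff was close to $r^i(Q^i,C^{-i})$; and one also needs $r^j(Q^i,C^{-i})$ to be acceptable to every $j \ne i$. Both follow from the fact that the original $x_k$ was an $\ep_k$-equilibrium with $i$ quitting at stage~1, but the argument you wrote does not quite say this. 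Second, your description of Solan--Vieille (2001, Proposition~2.13) as ``phrased in the stationary, linear-complementarity framework'' is off: the LCP machinery is from Solan and Solan (2020), not Solan--Vieille (2001). The relevant content of Proposition~2.13 here is a perturbation/modification lemma for strategy profiles in quitting games, and Simon (2007, Theorem~3) is the structural result that actually produces the sequentially $\ep$-perfect absorbing profile in regime~(iii). Your worry about whether the cited proposition ``really applies to non-stationary, spread-out profiles'' is therefore somewhat misplaced --- the non-stationary analysis is precisely what Simon's theorem supplies. In short: right architecture, but the attribution of which cited result does which job is scrambled, and since the whole proof \emph{is} the citation, getting that attribution right is the entire content.
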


\footnotetext{The min-max level of player~$i$ is $v^i := \inf_{x^{-i}}\sup_{x^i} \gamma^i(x^i,x^{-i})$.}

\begin{theorem}[Solan and Vieille, 2001, Propositions 2.4 and 2.13]
\label{theorem:sv}
Let $\ep > 0$ be sufficiently small.
Every absorbing strategy profile $x$ at which all players are sequentially $\ep$-perfect is an $\ep^{1/6}$-equilibrium.
\end{theorem}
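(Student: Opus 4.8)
The plan is to fix a player $i\in I$ and an arbitrary strategy $y=(y_n)_{n\in\dN}$ of player~$i$, and to prove that $\gamma^i(y,x^{-i})\le\gamma^i(x)+\ep^{1/6}$. First I would reduce the class of deviations: since $x^{-i}$ is held fixed and the only payoff-relevant feature of a history along which play has not yet terminated is the index $n$ of the current stage, the deviator faces a Markov decision problem in $n$, so it suffices to treat strategies $y$ that depend on the stage only. Next I would extract the one-stage content of sequential $\ep$-perfectness. Put $w_n:=\gamma^i_n(x)$ (so $w_1=\gamma^i(x)$), $q_n:=p(C^i,x^{-i}_n)=1-\prod_{j\ne i}(1-x^j_n)$ (the probability that some player other than $i$ quits at stage $n$), $\rho_n:=r^i(C^i,x^{-i}_n)$, and $R:=\max_{a\in A}\max_{j\in I}|r^j(a)|$. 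Applying \eqref{equ:91} to the two actions $Q^i$ and $C^i$ of player~$i$ in $G_\Gamma(\gamma_{n+1}(x))$, whose payoffs are as spelled out in the Remark above, gives for every $n$
\[
r^i(Q^i,x^{-i}_n)\ \le\ w_n+\ep,\qquad (1-q_n)\,w_{n+1}+q_n\rho_n\ \le\ w_n+\ep ,
\]
while \eqref{equ:92} gives the matching lower bounds at every stage $n$ with $x^i_n\in(0,1)$.

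The second step is a telescoping estimate for the deviation payoff. On the product probability space carrying the players' independent randomizations, decompose $\gamma^i(y,x^{-i})$ according to the first stage $\theta$ at which play terminates and, at that stage, according to whether player~$i$ is among the quitters: conditionally on reaching stage $n$, if player~$i$ quits at $n$ her conditional expected payoff is exactly $r^i(Q^i,x^{-i}_n)$, if she continues but somebody else quits it is $\rho_n$, and otherwise play moves on to stage $n+1$. Writing $\pi_n:=\prob_{(y,x^{-i})}(\theta\ge n)=\prod_{m<n}(1-y_m)(1-q_m)$ for the probability of reaching stage $n$ and bounding the stage-$n$ contribution via the two inequalities above, the identity $\pi_n(1-y_n)(1-q_n)=\pi_{n+1}$ makes the truncated sum telescope to $w_1-\pi_{N+1}w_{N+1}+\ep\sum_{n\le N}\pi_n$; since $\sum_{n\le N}\pi_n\le N$ and the contribution of stages $>N$ has absolute value at most $R\pi_{N+1}$, this produces the key estimate
\[
\gamma^i(y,x^{-i})-\gamma^i(x)\ \le\ N\ep+2R\,\prob_{(y,x^{-i})}(\theta>N)\qquad\text{for every }N\ge1 .
\]

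The last and decisive step is to bound the non-termination probability $\prob_{(y,x^{-i})}(\theta>N)=\prod_{m\le N}(1-y_m)(1-q_m)$, and this is where I would bring to bear that $x$ is absorbing together with the lower bounds \eqref{equ:92}. Since a deviation removes only player~$i$'s own quitting, $\prob_{(y,x^{-i})}(\theta>N)\le\prod_{m\le N}(1-q_m)$, which tends to $0$ precisely when the other players' quitting is by itself absorbing; in that regime one picks $N=N(x,\ep)$ making $\prod_{m\le N}(1-q_m)$ small and only has to check that $N\ep$ is still small. When it is not, the deviation can reach non-termination and collect $r^i(\vec C)$ with positive probability, and here one invokes \eqref{equ:92}: wherever player~$i$ quits with non-negligible probability under $x$, quitting is within $\ep$ of $w_m$, so suppressing it --- the only thing a deviation can do to postpone absorption --- is (nearly) worthless. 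Concretely one would split the stages $m\le N$ at a threshold $\delta$, absorbing the stages with $x^i_m<\delta$ into an $O(N\delta)$ error on the one-stage absorption rate and the stages with $x^i_m\ge\delta$ into an $O(\ep)$-per-stage loss, and thereby control $\prob_{(y,x^{-i})}(\theta>N)$ by $\prob_x(\theta>N)$ up to these errors. Substituting back into the key estimate and choosing $N$, $\delta$, and the threshold as appropriate powers of $\ep$, the balance of the competing error terms is what yields a bound of the form $C\ep^{1/6}$, hence $\le\ep^{1/6}$ once $\ep$ is small; this optimization is routine once the estimates are in place.

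I expect the main obstacle to be precisely this last step: quantifying how much a deviation can delay, or prevent, absorption, given that ``$x$ is absorbing'' is only a qualitative hypothesis carrying no a-priori rate on $\prob_x(\theta>N)$, while the deviator can suppress exactly her own contribution to absorption --- including the ``never quit'' deviation, which is the one the one-stage perfectness conditions do not visibly dominate. The substance is to show, via \eqref{equ:92}, that wherever that contribution matters it is also (nearly) a best reply and hence (nearly) worthless to suppress; getting the bookkeeping of the several competing error terms right, and the consequent need to balance nested powers of $\ep$, is what produces the exponent $1/6$ rather than $1$, with the remaining manipulations routine.
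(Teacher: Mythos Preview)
The paper does not prove this theorem. Theorem~\ref{theorem:sv} is stated with attribution to Propositions~2.4 and~2.13 of Solan and Vieille (2001) and is used as a black box; the text moves immediately to Section~\ref{section:ap} without giving any argument. There is therefore no ``paper's own proof'' against which to compare your proposal.

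As an independent sketch, your reduction to stage-dependent deviations and the telescoping bound
\[
\gamma^i(y,x^{-i})-\gamma^i(x)\ \le\ N\ep+2R\,\prob_{(y,x^{-i})}(\theta>N)
\]
are correct, and you have correctly located the real difficulty: when the profile $(C^i,x^{-i})$ is \emph{not} absorbing, the right-hand side does not tend to $0$ as $N\to\infty$, while the hypothesis ``$x$ is absorbing'' carries no rate, so $N$ cannot be chosen as a power of $\ep$ alone. Your proposed fix, however, is not as routine as you suggest. Splitting at a threshold $\delta$ does give a multiplicative comparison $\prod_{m\le N}(1-q_m)\le e^{N\delta}\prod_{m\le N}(1-p(x_m))$ over the stages with $x^i_m<\delta$, but the stages with $x^i_m\ge\delta$ are not automatically few (one can have $x^i_m\equiv\tfrac12$), and ``quitting is within $\ep$ of $w_m$'' at those stages is a statement about \emph{payoffs}, not about the \emph{probability} $\prob_{(y,x^{-i})}(\theta>N)$ you are trying to control. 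Converting the lower bounds from \eqref{equ:92} into a usable estimate on the deviation payoff when $\pi_\infty:=\prod_m(1-q_m)>0$ --- in particular neutralizing the $\pi_\infty\,r^i(\vec C)$ term that the never-quit deviation collects --- is exactly where the Solan--Vieille argument does substantive work, and your outline does not yet supply that step.
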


\section{An Alternative Representation of Strategy Profiles}
\label{section:ap}

A strategy profile $x = (x_n)_{n \in \dN}$ is parameterized by
time: $x_n^i$ is the probability that player~$i$ quits at stage
$n$ if the game did not terminate before that stage.
As is well known, the space of
strategies is compact in the product topology.
There are two issues with this topology:
\begin{itemize}
\item
The payoff is
not continuous in this topology.
Indeed, if for every $k \in \dN$, $x^k$ is the
stationary strategy profile in which in every stage each player
quits with probability $\tfrac{1}{k}$, then the sequence
$(x^k)_{k \in \dN}$ converges to the strategy profile $x$
that always continues. While under the strategy profile $x^k$
absorption occurs with probability 1 and $\lim_{k \to \infty}
\gamma(x^k) = \tfrac{1}{|I|} \sum_{i \in I} r(Q^i,C^{-i})$,
under the strategy profile $x$ the game is never absorbed and
$\gamma(x) = r(\vec C)$.
\item
It may not be possible to generate the limit behavior of a sequence of strategy profiles by a strategy profile.
For example,
when $(x^k)$ are the strategy profiles that are defined in the first bullet,
we have
$\lim_{k \to \infty} \prob_{x^k}[a_\theta = (Q^i,C^{-i}) \mid \theta = n] = \frac{1}{|I|}$ for every $n \in \dN$,
yet there is no strategy profile $x$ that satisfies
$\prob_{x}[a_\theta = (Q^i,C^{-i}) \mid \theta = n] = \frac{1}{|I|}$ for every $n \in \dN$.
Indeed, under such a strategy profile $x = (x^i)_{i \in I}$,
for every $n \in \dN$ we have $x^i_n > 0$
for each $i \in I$,
and then $\sum_{i \in I} \prob_x[a_\theta = (Q^i,C^{-i}) \mid \theta = n] < 1$.
\end{itemize}

In this section we will provide an alternative representation of strategy profiles,
that takes care of both of these issues
by allowing
both discrete-time behavior and continuous-time behavior.
The representation will be based on a change of parametrization:
instead of parameterizing the
strategy profile according to time, we will parameterize it
according to the probability of termination. Thus, the parameter
$t$ will run from 0 to 1, and for every action profile $a \in A^*$
and every $t \in [0,1]$ we will indicate the probability by which
the game is absorbed by the action profile $a$ up to that moment
in which the total probability of absorption is $t$.

\subsection{Absorption Paths}

Let $\F$ be the set of c\`adl\`ag paths  $\pi=(\pi_t(a),a\in A^*)_{t\in[0,1]}$ with values in $[0,1]^{A^*}$,
such that, for all $a\in A^*$, $t\mapsto \pi_t(a)$ is nondecreasing. We endow $\F$  with the weak topology:
a sequence $(\pi^k)\subset\F$ converges to $\pi$ if
$\int_{[0,1]}f(t)d\pi^k_t(a)\to\int_{[0,1]}f(t)d\pi_t(a)$,
for every continuous map $f:[0,1]\to\R$ and every $a\in A^*$.
In such a case we write $\pi^k\Rightarrow\pi$.
Recall that $\pi^k\Rightarrow\pi$ if and only if $\pi^k_t\to\pi_t$ for every $t\in [0,1]$ where $\pi$ is continuous,
and that the set $\F$ is sequentially compact.

For each $\pi\in\F$, set
$\pi_{0-}(a):=0$ for every $a \in A^*$,
$\widehat\pi_t := \sum_{a\in A^*}\pi_t(a)$, and
$\Delta\pi_t := \pi_t-\pi_{t-}$ for every $t\in [0,1]$.
Set $T(\pi):=\{ t\in[0,1], \widehat\pi_t=t\}$, and  denote by $S(\pi)$ the set of jumps: $S(\pi)=\{ t\in[0,1], \Delta\pi_t\neq 0\}$.

Finally we introduce the right-hand side derivative of $t\mapsto\pi_t$ : for every $t\in[0,1)$
set $\dot{\pi}_t := \liminf_{s\searrow t}\frac{\pi_s-\pi_t}{s-t}$.
By Lebesgue's Theorem for the differentiability of monotone functions,
since $t \mapsto \pi_t(a)$ is nondecreasing for every $a \in A^*$,
the liminf is in fact a limit almost everywhere in $[0,1 )$.

\begin{definition}
    \label{defabs}
The set $\A$ of {\bf absorption paths} is the set of all paths $\pi\in\F$ such that the following hold.
\begin{enumerate}
    \item[(A.1)] For every $t\in[0,1]$, we have $\widehat\pi_t\geq t$.
    \item[(A.2)] On each connected component $(t_1,t_2)$ of $[0,1]\setminus (S(\pi)\cup T(\pi))$, $\widehat\pi$ is constant and equal to $t_2$.
    \item[(A.3)] For every $t\in S(\pi)$,
     there exists $\xi_t = (\xi^i_t)_{i\in N}\in[0,1]^I$ such that
\begin{equation}
\label{deltaxi}     \frac{\Delta\pi_t(a)}{1-t}=\xi_t(a), \ \ \ \forall a\in A^*.
\end{equation}
    \item[(A.4)]  For every $t\in T(\pi) \setminus\{ 1\}$ we have $\supp(\dot{\pi}_t)\subseteq A^*_1$.
    \end{enumerate}

\end{definition}

\begin{remarks}
    \label{const}
    Let $\pi\in\A$ be an absorption path.
    \begin{enumerate}
    \item
    For every $t \in S(\pi) \cup T(\pi)$,
    the quantity $\pi_t(a)$ should be thought of as the unconditional probability that the play is absorbed by
    the action profile $a$, until the moment in which the total probability of absorption is $t$.

    \item
    Elements
    $t \in S(\pi)$ correspond to play in discrete time, and for such $t$,
    $\xi_t$ is the mixed action profile the players play at $t$,
and $1-t$ is the total probability of absorption up to $t$.
    This explains (A.3).
    \item
    Elements
 $t \in T(\pi) \setminus \{1\}$ correspond to play in continuous time. This explains (A.4).
    \item
If $(t,t')$ is a connected component of
$[0,1] \setminus (S(\pi) \cup T(\pi))$,
then $t \in S(\pi)$ and $t' = t + (1-t)p(\xi_t)$.
This interval corresponds to the increase in probability due to play in discrete time.

    \item Since, for all $a\in A^*$,  $s\mapsto\pi_s(a)$ is nondecreasing,
    $\pi$ is continuous at $t$ if and only if $\widehat\pi$ is continuous at $t$,
    for every $t\in [0,1]$.
It follows from (A.2) that on each connected component of $[0,1]\setminus(S(\pi)\cup T(\pi))$ the process $\pi$ is constant.

    \item  Let $t\in S(\pi)$.
            Since $\pi$ is c\`adl\`ag and nondecreasing, we get from (A.1) that
            $\widehat \pi_t>t$,
            and from (A.2) that $\widehat\pi_{s}=\widehat\pi_{t}$ for every $s\in[t,\widehat\pi_t)$.
            In particular $\widehat\pi_{\widehat\pi_ t-}=\widehat\pi_t$.
\item For every $t\in[0,1]$, both $\widehat\pi_{t-}$ and $\widehat\pi_t$ belong to the set $T(\pi)\cup S(\pi)$.

\item
From (A.2) and Remark~\ref{const}(7),
we deduce that $[0,1)$ is partitioned to a countable number of intervals $U=[t_1,t_2)$,
with, either $U\subset T(\pi)$, or $t_1\in S(\pi)$ and $t_2=\widehat\pi_{t_1}$.
    On each of these intervals, $\pi$ is continuous, with $\widehat\pi_t=t$ if $U\subset T(\pi)$, and $\widehat\pi_t=t_2$ otherwise.

        \item The function $\pi$ is continuous at $t=1$: indeed, since, for all $t\in[0,1]$, $t\leq\widehat \pi_t\leq 1$,
        we have $\widehat\pi_1=\lim_{t\nearrow 1}\widehat\pi_t=1$.
\item
For every $a \in A^*_{\geq 2}$, the function $t \mapsto \pi_t(a)$ is piecewise constant.
\item
The reader may wonder why we defined $\dot\pi$ with liminf and not with limsup.
It turns out that to ensure that the set of absorption paths is sequentially compact
(see Proposition \ref{AP:compact} below),
we need to define $\dot\pi$ with liminf.
    \end{enumerate}

    \end{remarks}

\begin{example}
\label{example:1}
Figure~\arabic{figurecounter} displays an absorption path $\pi$ for the case $|I|=2$.
The interpretation of this absorption path is the following:
First Players~1 and~2 simultaneously quit with positive probability,
Player~1 with probability $\tfrac{1}{3}$ and Player~2 with probability $\tfrac{1}{4}$;
then Player~1 quits alone with probability $\tfrac{1}{2}$;
and then Players~1 and~2 quit together in continuous time, Player~1 with rate $\tfrac{1}{2}$
and Player~2 with rate $\tfrac{1}{4}$. We have $S(\pi) = \{0,\frac{1}{2}\}$ and $T(\pi) = [\frac{3}{4},1]$.

\begin{figure}
\center{
    \includegraphics[scale=0.25]{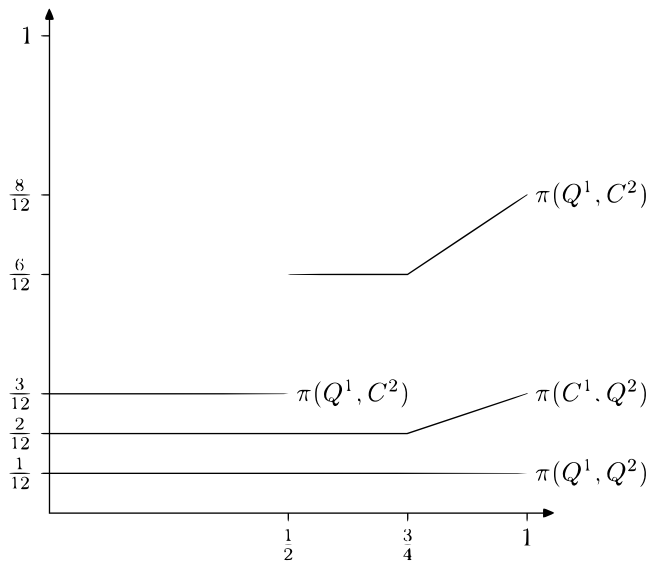}
   \caption{The absorption path in Example~\ref{example:1}.}}
\end{figure}

\addtocounter{figurecounter}{1}

\end{example}

\begin{remark}
\label{remark:strategy}
Every absorbing strategy profile
$x = (x_n)_{n \in \dN}$
naturally defines an absorption path $\pi^x$
that contains only discrete-time aspects.
Indeed, for every $n \in \dN$ denote $t_n := \prob_x(\theta < n)$,
and define
\[ \pi^x_t(a) := \prob_x(\theta \leq n, a_\theta = a), \ \ \ \forall a \in A^*, n \in \dN, t \in [t_n,t_{n+1}). \]
The reader can verify that
$S(\pi^x) = \{t_1,t_2,\ldots\}$, $T(\pi^x) = \{1\}$, and $\xi_{t_n} = x_n$ for every $n \in \dN$.
\end{remark}

\begin{remark}
The function $x \mapsto\pi^x$ that is defined in
Remark~\ref{remark:strategy} is not one-to-one. Indeed, fix an
absorbing strategy profile $x$ and let $x'$ be the strategy
profile in which all players continue in the first stage, and from
the second stage on they follow $x$ :
\[ x'^i_n =
\left\{
\begin{array}{lll}
C^i, & \ \ \ \ \ & \hbox{if } n = 1,\\
x^i_{n-1}, & & \hbox{if } n > 1.
\end{array}
\right.
\]
Then $\pi^{x'} = \pi^x$. In fact, given an
absorbing strategy profile $x$, the addition or elimination of
stages in which all players continue is the only way to create an
absorbing strategy profile $x'$ such that $\pi^x = \pi^{x'}$.
\end{remark}

The following result states that the
set of all $\pi^x$, where $x$ ranges over all absorbing strategy profile,
is dense in the set of absorption paths.
Thus, the set of absorption paths is a compactification of the set of absorbing strategy profiles.

\begin{proposition}
\label{prop:convergence}
For every absorption path $\pi$ there is a sequence of absorbing strategy
profiles $(x^k)_{k\in\dN}$ such that $\pi^{x^k}\Rightarrow\pi$.
\end{proposition}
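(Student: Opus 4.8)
Since an absorbing strategy profile induces a purely atomic path with $T(\pi^x)=\{1\}$ (Remark~\ref{remark:strategy}), the plan is to approximate $\pi$ by such a path in two steps: first replace $\pi$ by an absorption path having only finitely many ``features'', at a cost that can be made arbitrarily small, and then simulate a finite-feature path by one absorbing strategy profile. The simulation rests on two elementary devices. A jump of $\pi$ at $t\in S(\pi)$, which by (A.3) has increments $\Delta\pi_t(a)=(1-t)\xi_t(a)$, is reproduced by a single discrete stage in which the players play the mixed action profile $\xi_t$: if the conditional survival probability entering that stage is $1-t$, this adds to $\pi^x$ exactly the atom $(1-t)\xi_t(a)$ and moves the running absorption probability from $t$ to $\widehat\pi_t$. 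A continuous piece $U=[\alpha,\beta)\subseteq T(\pi)$, on which $\widehat\pi_s=s$ and $\supp(\dot\pi_s)\subseteq A^*_1$ (by (A.4)), whence $\pi_t((Q^i,C^{-i}))=\pi_\alpha((Q^i,C^{-i}))+\int_\alpha^t\dot\pi_s((Q^i,C^{-i}))\,ds$ with $\sum_{i\in I}\dot\pi_s((Q^i,C^{-i}))=1$ for almost every $s\in U$, is simulated over a fine grid $\alpha=\tau_0<\cdots<\tau_L=\beta$ by letting player~$i$ quit at stage~$\ell$ with the small probability $\tfrac{1}{1-\tau_\ell}\int_{\tau_\ell}^{\tau_{\ell+1}}\dot\pi_s((Q^i,C^{-i}))\,ds$ --- a legitimate mixed action once the grid is fine and $U$ is bounded away from $t=1$. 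As the mesh tends to $0$, the atoms produced on $U$ form a Riemann sum for the measure with density $\dot\pi_s((Q^i,C^{-i}))$, the per-stage discrepancy being quadratic in the mesh size (it comes from the factors $\prod_{j\neq i}(1-\xi^j)$ and from a small drift in the running survival probability), while every coordinate $a\in A^*_{\geq 2}$ receives there only a mass of order the mesh size.

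To build $x^k$, fix $k$ and put $\eta:=1/k$. Using the partition of $[0,1)$ from Remark~\ref{const}(8) and the bound $\sum_{t\in S(\pi)}(\widehat\pi_t-\widehat\pi_{t-})\le 1$, select finitely many features: the jump intervals $[t,\widehat\pi_t)$ for the finitely many $t\in S(\pi)$ whose jump exceeds a suitable threshold, together with finitely many of the continuous intervals of the partition (truncated so as to remain in $[0,1-\eta]$), so that their union has length at least $1-\eta$. The finitely many ``gaps'' that remain in $[0,1)$, together with the tail $(1-\eta,1]$, then have total length $O(\eta)$ and hence carry $\pi$-mass $O(\eta)$ in every coordinate $a\in A^*$ (for the tail this uses $\widehat\pi_1-\widehat\pi_{1-\eta}\le 1-(1-\eta)=\eta$, which follows from $\widehat\pi_t\ge t$). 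Now scan $[0,1)$ from left to right, maintaining the running absorption probability: simulate each selected jump feature by one stage and each selected continuous feature by $L_k$ stages as above, with $L_k\to\infty$ and meshes tending to $0$; bridge each gap by one extra stage that advances the running probability to the left endpoint of the next feature, splitting the required absorption probability among the players in any fixed way; and after the last feature append one stage in which all players quit with probability~$1$, so that $x^k$ is absorbing and $\pi^{x^k}$ is well defined.

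It remains to check that $\pi^{x^k}\Rightarrow\pi$, that is, by the definition of the weak topology on $\F$, that $\int_{[0,1]}f(t)\,d\pi^{x^k}_t(a)\to\int_{[0,1]}f(t)\,d\pi_t(a)$ for every continuous $f\colon[0,1]\to\R$ and every $a\in A^*$. On the selected jump features the atoms of $\pi^{x^k}$ match those of $\pi$ up to the cumulative drift, which is negligible since all meshes tend to $0$; on the selected continuous features the atoms of $\pi^{x^k}$ converge weakly to the measure with density $\dot\pi_s(a)$, by the Riemann-sum estimate above, while the coordinates $a\in A^*_{\geq 2}$ receive there a mass tending to $0$, in agreement with the fact that $t\mapsto\pi_t(a)$ is constant on those intervals; and on the gaps and the tail both $\pi^{x^k}$ and $\pi$ place mass $O(\eta)$. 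As every error term tends to $0$ when $k\to\infty$, the convergence follows.

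The crux is not any single estimate but the organisation of the finite truncation: a general absorption path may have countably many jumps, possibly accumulating at interior points, interleaved with continuous-time behaviour on an essentially arbitrary subset of $[0,1]$, so one must select the finitely many features and bound the discretisation, drift, and truncation errors uniformly in $k$. Once a good finite-feature approximation is in hand, constructing the strategy profile and verifying the convergence are routine.
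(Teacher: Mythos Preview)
Your approach is correct and conceptually sound, but it is organised quite differently from the paper's proof. You separate $\pi$ into finitely many ``features'' (large jumps, continuous intervals, gaps, and a tail near $t=1$), simulate each feature by a tailored block of stages, and then bound the discretisation, drift, and truncation errors separately for each block. The paper, by contrast, avoids this case distinction altogether: it builds a single adaptive grid $(s^k_n)_n$ on $[0,1]$ that steps over each large jump (those with $p(\xi_t)\ge 1/k$) in one move and otherwise advances by at most $(1-s^k_n)/k$, and at every grid point it chooses the mixed action via one technical lemma (Lemma~\ref{lemma:technical}) that simultaneously handles small jumps and continuous motion. Because the total absorption probability is matched exactly at each step (Eq.~\eqref{equ:88}), there is no drift in $\widehat\pi$, and a clean inductive sup-norm bound $\|\pi^{x^k}_{s^k_n-}-\pi_{s^k_n-}\|_\infty\le s^k_n\cdot 2^{|I|}(|I|+1)/k$ follows.

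What each buys: your decomposition is more transparent about \emph{why} the approximation works (jumps are trivially matched, continuous pieces become Riemann sums), at the cost of bookkeeping for drift and gap-bridging stages. The paper's uniform treatment is shorter and yields a quantitative estimate at every grid point, not just weak convergence; this stronger conclusion is in fact reused later in the proof of Theorem~\ref{theorem:6}, where one needs $\sup_n\|\gamma_{s^k_n}(\pi^{x^k})-\gamma_{s^k_n}(\pi)\|_\infty\to 0$ and not merely $\pi^{x^k}\Rightarrow\pi$. Your argument, as written, delivers only the weak convergence asked for in the proposition, so if you follow your route you would need to revisit the estimates when you reach Theorem~\ref{theorem:6}.
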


To prove Proposition~\ref{prop:convergence} we need the following technical lemma.
\begin{lemma}
\label{lemma:technical}
Let $\ep > 0$
be sufficiently small,
and let $y \in \Delta(A)$ be a distribution that satisfies
$p(y)
:= 1 - y(\vec C)
\leq \ep$ and $y(a) \leq \ep y(Q^i,C^{-i})$
for each $i \in I$ and every $a \in A^*_{\geq 2}$ such that $a^i = Q^i$.
Let $\xi \in [0,1]^I$ be the unique mixed action profile that satisfies
$p(\xi) = p(y)$ and
\begin{equation}
\label{equ:t1}
\frac{\xi^i}{\xi^j} = \frac{y(Q^i,C^{-i})}{y(Q^j,C^{-j})}, \ \ \ \forall i,j \in I,
\end{equation}
where $\frac00=1$.
Then

\begin{equation}
\label{equ:technical}
|\xi(a) - y(a)| \leq 2^{|I|}
\cdot(|I|+1)\cdot
\ep p(y), \ \ \ \forall a \in A^*.
\end{equation}
\end{lemma}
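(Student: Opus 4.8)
The plan is to view $\xi$ as the product (independence) distribution it induces on $A$ and to show, coordinate by coordinate, that it is close to $y$. Write $\alpha^i:=y(Q^i,C^{-i})$ for the single-quit masses, $\alpha:=\sum_{i\in I}\alpha^i$, and $\beta:=\sum_{a\in A^*_{\geq 2}}y(a)$, so that $p(y)=\alpha+\beta$. If $p(y)=0$ then $\xi=\vec 0$, $y(a)=0$ for every $a\in A^*$, and both sides of \eqref{equ:technical} vanish, so assume $p(y)>0$; the second hypothesis then forces $\alpha>0$ (otherwise $y(a)=0$ for every $a\in A^*_{\geq 2}$, whence $p(y)=\beta=0$). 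Two elementary facts will be used repeatedly. First, $\xi^i\le p(\xi)=p(y)\le\ep$ for every $i$, since $1-p(\xi)=\prod_{j\in I}(1-\xi^j)\le 1-\xi^i$, and similarly $\alpha^i\le p(y)$. Second, $\xi(\vec C)=\prod_{i\in I}(1-\xi^i)=1-p(\xi)=1-p(y)=y(\vec C)$, so $\xi(\cdot)$ and $y(\cdot)$ place the same total mass $p(y)$ on $A^*$.

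Next I would compress the two hypotheses into a comparison between $\alpha$ and $p(y)$, and then pin down the proportionality constant of \eqref{equ:t1}. Every $a\in A^*_{\geq 2}$ has at least two coordinates equal to $Q$, so the second hypothesis gives $y(a)\le\ep\,\alpha^i$ for each such coordinate $i$; summing over $A^*_{\geq 2}$ and counting, for fixed $i$, how many such profiles have $a^i=Q^i$, yields $\beta\le c_1\ep\,\alpha$ for a constant $c_1$ depending only on $|I|$, hence $\alpha\le p(y)\le(1+c_1\ep)\alpha$. By \eqref{equ:t1} all the $\xi^i$ are the same multiple of the corresponding $\alpha^i$, so write $\xi^i=\lambda\alpha^i$ with $\lambda\ge 0$, and set $s:=\sum_{i\in I}\xi^i=\lambda\alpha\le|I|\ep$. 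The normalization $p(\xi)=p(y)$ together with the sandwich $1-s\le\prod_{i\in I}(1-\xi^i)\le e^{-s}\le 1-s+\tfrac{s^2}{2}$ gives $p(y)\le s\le \frac{p(y)}{1-|I|\ep/2}$, which combined with $\alpha\le p(y)\le(1+c_1\ep)\alpha$ yields $1\le\lambda\le 1+c_2\ep$ for $\ep$ small, with $c_2$ depending only on $|I|$.

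It remains to bound $|\xi(a)-y(a)|$, and here the estimate splits in two. For a single-quit profile $a=(Q^i,C^{-i})$ we have $\xi(a)=\xi^i\prod_{j\ne i}(1-\xi^j)$ and $y(a)=\alpha^i$, so
\[
|\xi(a)-y(a)|\;\le\;(\lambda-1)\alpha^i+\xi^i\Bigl(1-\prod_{j\ne i}(1-\xi^j)\Bigr)\;\le\;(\lambda-1)\alpha^i+\xi^i\!\!\sum_{j\ne i}\xi^j\;\le\;\bigl(c_2+|I|-1\bigr)\ep\,p(y),
\]
using $\alpha^i\le p(y)$, $\xi^i\le p(y)$, and $\xi^j\le\ep$. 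For a profile $a\in A^*_{\geq 2}$, choose two coordinates $i_1,i_2$ with $a^{i_1}=a^{i_2}=Q$; then $\xi(a)\le\xi^{i_1}\xi^{i_2}\le\ep\,\xi^{i_1}\le\lambda\ep\,\alpha^{i_1}\le 2\ep\,p(y)$, while $y(a)\le\ep\,\alpha^{i_1}\le\ep\,p(y)$, so $|\xi(a)-y(a)|\le 3\ep\,p(y)$. Finally I would check that, for $\ep$ sufficiently small, each of the constants $c_1$, $c_2+|I|-1$, and $3$ is at most $2^{|I|}(|I|+1)$, which leaves ample slack. There is no conceptual obstacle here: the only step requiring real care is controlling $\lambda-1$ (the last step of the previous paragraph) sharply enough via the $e^{-s}$ sandwich so that the accumulated slack still fits under $2^{|I|}(|I|+1)$, together with the attendant bookkeeping of the $|I|$-dependent constants.
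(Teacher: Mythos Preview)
Your proof is correct and follows essentially the same strategy as the paper: both arguments bound $\xi(a)$ and $y(a)$ crudely by $O(\ep\,p(y))$ for $a\in A^*_{\ge 2}$, and then reduce the $A^*_1$ case to showing that the proportionality constant $\lambda$ in $\xi^i=\lambda\,y(Q^i,C^{-i})$ satisfies $|\lambda-1|=O(\ep)$. The only minor technical difference is how $\lambda\approx 1$ is obtained: the paper uses the total-mass identity $\sum_{a\in A^*}\xi(a)=\sum_{a\in A^*}y(a)=p(y)$ and subtracts the already-controlled $A^*_{\ge 2}$ contributions to compare $\sum_i\xi^i$ with $\sum_i y(Q^i,C^{-i})$, whereas you reach the same conclusion via the analytic sandwich $s-s^2/2\le p(\xi)\le s$ combined with $\alpha\le p(y)\le(1+c_1\ep)\alpha$; both routes are short and yield constants comfortably below $2^{|I|}(|I|+1)$.
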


\begin{proof}
For every $i \in I$ we have
$y(Q^i,C^{-i}) \leq p(y) \leq \ep$,
and similarly $\xi^i \leq \ep$.
This implies that $y(a), \xi(a) \in [0,\ep p(y)]$ for every $a \in A^*_{\geq 2}$,
hence Eq.~\eqref{equ:technical} holds for $a \in A^*_{\geq 2}$.
It follows that
\[ \left| \sum_{a \in A^*_1} \xi(a) - \sum_{a \in A^*_1} y(a) \right| \leq 2^{|I|}\ep p(y) \]
and
$0 \leq \xi^i - \xi(Q^i,C^{-i}) \leq 2^{|I|-1}\ep p(y)$ for every $i \in I$.
Hence
\[ \left| \sum_{i \in I} \xi^i - \sum_{a \in A^*_1} y(a) \right| \leq 2^{|I|}\cdot(|I|+1)\cdot\ep p(y). \]
Eq.~\eqref{equ:t1} implies now that Eq.~\eqref{equ:technical} holds for $a \in A^*_1$, provided $\ep$ is sufficiently small.
\end{proof}

\medskip

Note that $\xi$ in Lemma~\ref{lemma:technical} is uniquely defined, because
$\xi^{i} = z \cdot \frac{y(Q^i,C^{-i})}
{\sum_{j\in I} y(Q^j,C^{-j})}$,
where $z$ is determined so that $p(\xi) = p(y)$.

\medskip

\begin{proof}[Proof of Proposition~\ref{prop:convergence}]
The idea of the proof is to discretize $[0,1]$,
that is, for every $k\in\N$,
we define a countable set $S^k=(s^k_n)_{n\in\N}\subset[0,1]$ and a strategy profile $x^k$ in such a way that $x^k_n$ approximates
the behavior under $\pi$ between the $n$'th and $(n+1)$'st point of $S^k$.
The set $S^k$ contains the points $t$ in $S(\pi)$ where the conditional probability of quitting is larger than $\frac 1k$,
and covers $[0,1]$
minus the corresponding intervals $[t,\widehat\pi_t)$ with well chosen points $s^k_n$ such that $s^k_{n+1}\leq \frac{1}{k}(1-s^k_n)$,
i.e., the conditional probability of absorption
in $[s^k_n,s^k_{n+1})$
is less than $\frac 1k$.

We turn to the formal construction.
Fix an absorption path $\pi \in \A$ and $k \in \dN$.
Let
\[ S_0^k := \{ t \in S(\pi) \colon \widehat\pi_t - t \geq \tfrac{1-t}{k}\} = \{ t \in S(\pi) \colon p(\xi_t) \geq \tfrac{1}{k}\}. \]
Define the set $S^k =(s^k_n)_{n\in\dN}\subset[0,1]$ as follows:
\begin{itemize}
    \item $s^k_1:=0$.
    \item For $n\in\N$,
    define
inductively
$s^k_{n+1}:=\sup\left(\left( (S(\pi)\cup T(\pi))\cap[0,s^k_n+ \frac{1-s^k_n}k]\right)\cup\{ \widehat\pi^k_{s^k_n}\}\right)$.
In words,
if $s^k_n \in S^k_0$ then $s^k_{n+1} =
\widehat\pi_{s^k_n}$,
and if
$s^k_n \not\in S^k_0$,
then $s^k_{n+1}$ is the maximal point in $S(\pi)\cup T(\pi)$ smaller than $s^k_n + \frac{1-s^k_n}k$.
\end{itemize}

Define a strategy profile $x^k$ as follows:
\begin{enumerate}
\item[(D.1)]
If $s_n^k \in S_0^k$, set
$x^k_n := \xi_{s_n^k}$.
\item[(D.2)]
If $s_n^k \not\in S_0^k$, let
$x^{k}_n = (x^{k,i}_n)_{i \in I}$ be the unique solution of the following system of equations:
\begin{eqnarray}
\label{equ:88}
p(x^k_n) &=& 1 - \prod_{i \in I} (1-x^{k,i}_n) = \frac{s^k_{n+1} - s^k_n}{1-s^k_n},\\
\label{equ:89}
\frac{x^{k,i}_n}{x^{k,j}_n} &=& \frac{\pi_{s^k_{n+1}-}(Q^i,C^{-i}) -\pi_{s^k_{n}-}(Q^i,C^{-i})}{\pi_{s^k_{n+1}-}(Q^j,C^{-j}) -\pi_{s^k_{n}-}(Q^j,C^{-j})},
\hbox{ where } \frac{0}{0} = 1.
\end{eqnarray}
\end{enumerate}

Recall that as mentioned after Lemma~\ref{lemma:technical},
a unique solution to Eqs.~\eqref{equ:88}--\eqref{equ:89} exists.

The convergence $\pi^{x^k} \Rightarrow \pi$ will follow as soon as we show that
\begin{equation}
\label{equ:801}
\|
\pi^{x^k}_{s^k_{n}-} - \pi_{s^k_{n}-}\|_\infty \leq s^k_{n} \cdot 2^{|I|}
\cdot (|I|+1)
/k, \ \ \ \forall k \in \dN, \forall n \in \dN.
\end{equation}
Eq.~\eqref{equ:801} is trivially satisfied for $k=1$.
We shall suppose that the relation is true for some $n\in\N$ and prove that it still holds for $n+1$.
(D.1) and Eq.~\eqref{equ:88} ensure that
$\widehat\pi^{x^k}_{s^k_{n+1}-} - \widehat\pi^{x^k}_{s^k_{n}-}=
\widehat\pi_{s^k_{n+1}-} - \widehat\pi_{s^k_{n}-}$ :
for every $n\in\dN$,
the probability of absorption
at stage $n$
under the probability $\pi^{x^k}$,
is the same as under the original absorption path $\pi$
in $[s^k_n,s^k_{n+1})$.
This implies that $\widehat \pi_{s^k_n-}^{x^k} = \widehat \pi_{s^k_n-}$ for every $n \in \dN$.

If $s^k_n \in S^k_0$, then (D.1) implies that $s^k_{n+1} = \widehat\pi_{s^n_k-}$ and
$\pi^{x^k}_{s^k_{n}-}(a) - \pi^{x^k}_{s^k_{n}-}(a)=\pi_{s^k_{n+1}-}(a) - \pi_{s^k_{n+1}-}(a) $ for every $a \in A^*$,
and therefore Eq.~\eqref{equ:801} holds for every $n+1$.

Suppose now that $s^k_n \not\in S^k_0$.
Set
$y(a) := \frac{\pi_{s^k_{n+1}-}(a) - \pi_{s^k_{n}-}(a)}{1-s^k_n}$
for every $a \in A^*$
(and $y(\vec C) := 1-\sum_{a \in A^*}y(a)$).
Then $p(y) = \frac{s^k_{n+1}-s^k_n}{1-s^k_n}$.
By Lemma~\ref{lemma:technical},
$|x^k_n(a) - y(a)| < 2^{|I|}\cdot
(|I|+1) \cdot
p(y)/k$ for every $a \in A^*$.
Since $p(y) = \frac{s^k_{n+1}-s^k_n}{1-s^k_n}$ and
\[ \pi^{x^k}_{s^k_{n+1}-}(a) = \pi^{x^k}_{s^k_{n}-}(a) + (1-s^k_n)x^k_n(a), \ \ \
\pi_{s^k_{n+1}-}(a) = \pi_{s^k_{n}-}(a) + (1-s^k_n)y(a), \]
it follows that
\begin{eqnarray*}
| \pi^{x^k}_{s^k_{n+1}-}(a) - \pi_{s^k_{n+1}-}(a)|
&\leq& s^k_n \cdot 2^{|I|}
\cdot (|I|+1) \cdot
/k + (1-s^k_n)\frac{s^k_{n+1}-s^k_n}{1-s^k_n}2^{|I|}
\cdot (|I|+1) \cdot
/k\\
&=& s^k_{n+1}\cdot2^{|I|}
\cdot (|I|+1) \cdot
/k,
\end{eqnarray*}
as desired.
\end{proof}

\begin{remark}
\label{remark:11}
The behavior ``Player~1 quits with probability 1, and all other players continue throughout the game'' may be
translated in many ways to absorption paths. Here are some examples:
\begin{itemize}
\item   Player~1 quits with probability 1 in the first stage of
the game. In this case, we have $T(\pi) = \{1\}$ and $S(\pi)=\{ 0\}$ (Figure~\arabic{figurecounter}(a)).
\item   Player~1
quits with probability $\tfrac{1}{2}$ in each stage. In this case,
we have $T(\pi) =\{1\}$ and $S=
\{0,\tfrac{1}{2},\tfrac{3}{4},\tfrac{7}{8},\cdots\}$ (Figure~\arabic{figurecounter}(b)).
 \item
Player~1 ``quits continuously''. Here  $S(\pi)=\emptyset$, $T(\pi) = [0,1]$, and
$\pi_t(Q^1,C^{-1}) = t$, for every $t \in [0,1]$ (Figure~\arabic{figurecounter}(c)).
\item
And we may have combinations of the above (Figure~\arabic{figurecounter}(d)).
\end{itemize}

\begin{figure}
\center{
\includegraphics[scale=0.2]{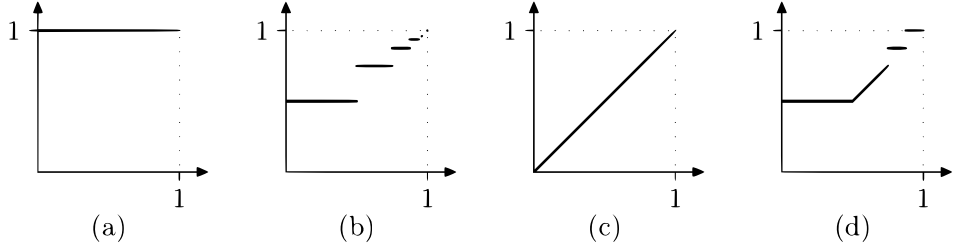}
   \caption{Four possibilities for the function $\pi_t(Q^1,C^{-1})$ in Remark~\ref{remark:11}.}}
\end{figure}

\addtocounter{figurecounter}{1}

\end{remark}

\begin{proposition}
\label{AP:compact}
The set of absorption paths $\A$ is sequentially compact:
for every sequence $(\pi^k)\in\A$ of absorption paths, there exists $\pi\in\A$ and a subsequence, still denote by $(\pi^k)$,
which converges weakly to $\pi$.
Moreover, this subsequence  can be chosen in such a way that
for every $t\in S(\pi)$, there are two sequences $(t_k)\subset[0,1]$ and $(\xi^k)\subset[0,1]^I$ with  $t_k\to t$ and $(\xi^k)\to\xi_{t}$ as $k\to\infty$,
and such that, for every $k\in\N$, $t_k\in S(\pi^k)$, and Eq.~\eqref{deltaxi} holds for $\pi^k$ and $\xi^k$ at $t_k$.
\end{proposition}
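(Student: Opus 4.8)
The plan is to use the sequential compactness of $\F$. Given $(\pi^k)\subset\A$, pass to a subsequence (still denoted $(\pi^k)$) with $\pi^k\Rightarrow\pi$ for some $\pi\in\F$, so $\pi^k_t\to\pi_t$ at every continuity point $t$ of $\pi$; it then remains to check that $\pi$ satisfies (A.1)--(A.4) and to produce the approximating jumps. Condition (A.1) is immediate: $\widehat{\pi^k}_t\geq t$ for all $t,k$, so one passes to the limit at continuity points and then uses right-continuity of $\widehat\pi$ to get $\widehat\pi_t\geq t$ everywhere. The crux is the analysis of jumps, which yields both the ``moreover'' statement and (A.3).

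The key structural fact is that jumps of an absorption path are \emph{spread out}: for $\pi^k\in\A$, if $s<s'$ are consecutive points of $S(\pi^k)$, then, since no coordinate of $\pi^k$ decreases, Remark~\ref{const}(6) gives that $\widehat{\pi^k}$ is constant on $[s,\widehat{\pi^k}_s)$, and $\widehat{\pi^k}_{s-}\geq s$ by (A.1), so $s'\geq\widehat{\pi^k}_s\geq s+\widehat{\Delta\pi^k}_s$. Hence the jump sizes of $\pi^k$ inside any interval of length $2\delta$ sum to at most $2\delta$ plus the largest of them, while the continuous-time part of the increment of $\widehat{\pi^k}$ over that interval is $\leq 2\delta$ (on the portions of $T(\pi^k)$ one has $\widehat{\pi^k}_s=s$). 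Now fix $t\in S(\pi)$ and small $\delta>0$ with $t\pm\delta$ continuity points of $\pi$; since $\pi$ is c\`adl\`ag, $\widehat\pi_{t+\delta}-\widehat\pi_{t-\delta}\to\widehat{\Delta\pi}_t$ as $\delta\downarrow0$. So for each small $\delta$ and $k$ large, the total jump mass of $\pi^k$ in $[t-\delta,t+\delta]$ is within $O(\delta)$ of $\widehat{\Delta\pi}_t$, and by spreading a single jump, at some $t_k\in[t-\delta,t+\delta]$, carries all but $O(\delta)$ of it; comparing coordinates, $\Delta\pi^k_{t_k}(a)$ lies within $O(\delta)$ of $\pi^k_{t+\delta}(a)-\pi^k_{t-\delta}(a)$, hence of $\Delta\pi_t(a)$, for every $a\in A^*$. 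A diagonal argument, letting $\delta\downarrow0$ and ranging over the countable set $S(\pi)$, produces $t_k\to t$ with $\Delta\pi^k_{t_k}(a)\to\Delta\pi_t(a)$, so, writing $\xi^k:=\xi^k_{t_k}$ for the profile of $\pi^k$ at $t_k$ (which satisfies Eq.~\eqref{deltaxi} by (A.3) for $\pi^k$), $\xi^k(a)=\Delta\pi^k_{t_k}(a)/(1-t_k)\to\Delta\pi_t(a)/(1-t)$. Finally $\xi^k$ itself converges, because each coordinate of a mixed action profile is a continuous function of the family $(\xi(a))_{a\in A}$ — indeed $1-\xi^{k,i}=\sum_{a:\,a^i=C^i}\xi^k(a)$ — so $\xi^k\to\xi_t$ for some $\xi_t\in[0,1]^I$ with $\xi_t(a)=\Delta\pi_t(a)/(1-t)$ by continuity of $\xi\mapsto\xi(a)$; this is exactly the ``moreover'' statement, and it verifies (A.3).

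Property (A.2) follows from the same picture. If $(t_1,t_2)$ is a connected component of $[0,1]\setminus(S(\pi)\cup T(\pi))$ and $t\in(t_1,t_2)$, then $t$ is a continuity point of $\pi$ with $\widehat\pi_t>t$, so $\widehat{\pi^k}_t>t$ for large $k$; by Remark~\ref{const}(8) applied to $\pi^k$ this puts $t$ in an interval $[s_k,L_k)$ with $s_k\in S(\pi^k)$, $\widehat{\pi^k}_{s_k-}=s_k$, $L_k=\widehat{\pi^k}_{s_k}$, and $\widehat{\pi^k}\equiv L_k$ there. Passing to a subsequence with $s_k\to s^*$, $L_k\to L^*$, the left/right-limit computations show $\widehat\pi_{s^*-}=s^*$ and $\widehat\pi\equiv L^*$ on an interval containing $(t_1,t_2)$; since $(t_1,t_2)$ avoids $S(\pi)\cup T(\pi)$, its endpoints force $L^*=t_2$, i.e.\ $\widehat\pi\equiv t_2$ on $(t_1,t_2)$.

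For (A.4), fix $t\in T(\pi)\setminus\{1\}$ and $a\in A^*_{\geq2}$, say with $a^j=Q^j$. In each $\pi^k$, $s\mapsto\pi^k_s(a)$ is piecewise constant (Remark~\ref{const}(10)) and increases at a jump $s$ only by $\Delta\pi^k_s(a)=(1-s)\xi^k_s(a)\leq\xi^{k,j}_s\,\widehat{\Delta\pi^k}_s$; summing, $\pi^k_{s'}(a)-\pi^k_s(a)\leq\big(\sup_{u\in(s,s']}\xi^{k,j}_u\big)(\widehat{\pi^k}_{s'}-\widehat{\pi^k}_s)$. Choosing continuity points $s_n\downarrow t$ along which $\widehat\pi_{s_n}-t$ stays comparable to $s_n-t$ (possible since $t\in T(\pi)$), and noting — via the jump analysis — that jumps of $\pi$ near $t$ have vanishing size while jumps of $\pi^k$ of non-vanishing size cannot concentrate near $t$ without creating one, one obtains that the supremum factor tends to $0$ along this sequence, whence $\dot\pi_t(a)=0$; the $\liminf$ in the definition of $\dot\pi$ is precisely what lets us restrict to this favourable sequence. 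I expect the main obstacle to be exactly the jump analysis of the second paragraph — showing that near each jump of $\pi$ the paths $\pi^k$ have a single dominant jump carrying almost all the mass, and controlling its mixed action profile, where both the spreading property and the recovery of $\xi$ from $(\xi(a))_{a\in A}$ are essential — together with the more delicate version of it needed for (A.4).
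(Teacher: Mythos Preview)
Your proposal is correct and follows the same overall scheme as the paper: pass to a weak limit in $\F$ and verify (A.1)--(A.4), with the heart of the matter being the location of a jump $t_k\in S(\pi^k)$ approximating each $t\in S(\pi)$.

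The one noteworthy difference is in how that jump is found. The paper picks $s_k\to t$ with $\pi^k_{s_k}\to\pi_t$ and sets $t_k:=\min\{s\leq s_k:\pi^k_s=\pi^k_{s_k}\}$; right-continuity and (A.2) for $\pi^k$ then force $t_k\in S(\pi^k)$, and a short argument comparing $\widehat{\pi^k}_{t_k-}$ with $\widehat\pi_{t-}$ gives $t_k\to t$ and $\Delta\pi^k_{t_k}\to\Delta\pi_t$ directly, with no mass accounting. Your ``spreading'' argument (consecutive jumps satisfy $s'-s\geq\widehat{\Delta\pi^k}_s$, so in a window of width $2\delta$ all jumps but the last sum to at most $2\delta$) reaches the same conclusion by a different route; it is a little heavier but perfectly valid, and your recovery of $\xi^{k,i}$ via $1-\xi^{k,i}=\sum_{a:a^i=C^i}\xi^k(a)$ is a clean way to finish (the paper just passes to a convergent subsequence of the $\xi^k$). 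For (A.2) the paper's argument is shorter than yours: once $\widehat{\pi^k}_t>\widehat\pi_t-\ep>t$, constancy of $\pi^k$ on $[t,\widehat\pi_t-\ep)$ passes to the limit immediately, without tracking $s_k,L_k$. For (A.4) the two arguments are essentially the same, the paper phrasing it as ``$p(\xi^k_{t'})<\ep$ for jumps $t'$ near $t$'' rather than bounding by $\sup\xi^{k,j}_u$.
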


\begin{proof}
Let $(\pi^k)$ be a sequence of absorption paths.
Since  $\F$ is sequentially compact,
there exists a subsequence, still denote by $(\pi^k)$,
and $\pi\in\F$, such that $\pi^k\Rightarrow\pi$. We have to show that $\pi\in\A$.

Since $\pi_t^k\to\pi_t$ for a.e. $t\in[0,1]$,
it follows that $\widehat \pi_t^k\to\widehat \pi_t$ for a.e. $t\in[0,1]$,
and therefore (A.1) passes to the limit: $\widehat\pi_t\geq t$ for all $t\in[0,1]$.

To show that (A.2) holds for $\pi$,
let $U$ be a connected component of $[0,1]\setminus(T(\pi)\cup S(\pi))$.
Fix  $t\in U$. Since $\pi$ is continuous at $t$, we have
$\pi_t = \lim_{k \to \infty} \pi^k_t$.
Since $\widehat\pi_t>t$,
for every $\ep\in(0,\widehat\pi_t-t)$, there exists $k_0 \in \dN$ such that for every $k\geq k_0$
we have $\widehat\pi^k_t>\widehat\pi_t-\ep> t$.
Since $\pi^k$ belong to $\A$, it is constant on $[t,\widehat\pi_t-\ep)$. It follows that $\pi$ is also constant on $[t,\widehat\pi_t-\ep)$.
Since this is true for every $\ep>0$ sufficiently small, $\pi$ is constant on $[t,\widehat\pi_t)$,
and is equal to $\pi_t$.

We turn to prove that (A.3) holds for $\pi$.
Fix $t\in S(\pi)$.
There exists a subsequence of $(\pi^k)$, still denoted $(\pi^k)$,
and a sequence $(s_k)\subset [0,1]$ such that $s_k\to t$ and $\pi^k_{s_k}\to \pi_t$.
For each $k$, set $t_k:=\min\{ s\leq s_k,\pi^k_s=\pi^k_{s_k}\}$,
where the infimum is attained because of the right continuity of $\pi^k$.
Since $t \in S(\pi)$ we have
$\widehat\pi_t>t$,
hence $\widehat\pi^k_{t_k}>t_k$ for every $k$ sufficiently large.
By the definition of $t_k$ and (A.2), it follows that $t_k\in S(\pi^k)$.

We argue that $t_k \to t$.
Let $\widetilde t$ be an accumulation point of $(t_k)$. Since
$t_k \leq s_k\to t$,
we have $\widetilde t\leq t$.
If $\widetilde t<t$,
consider $s\in[\widetilde t,t)$ such that $\pi^k_{s}\to\pi_{s}$.
Then, for every $\ep>0$ and every $k$ large enough, we have
\[  \widehat \pi_t - \ep \leq \widehat\pi^k_{s_k} = \widehat\pi^k_{t_k}
\leq \widehat\pi_{s}+\ep\leq \widehat\pi_{t-}+\ep,\]
which is impossible for
$\ep < (\widehat \pi_t - \widehat \pi_{t-})/2$.

Since $t_k \to t$,
every accumulation point of $(\pi^k_{t_k-})$ belongs to the set $\{\pi_{t-},\pi_t\}$, and,
since $\lim_{k \to \infty}\widehat\pi^k_{t_k-}=\lim_{k \to \infty}s_k=t<\widehat\pi_t$,
it follows that $\lim_{k \to \infty}\pi^k_{t_k-}=\pi_{t-}$,
which implies that $\lim_{k \to \infty}\Delta\pi^k_{t_k}=\Delta\pi_t$.

For each $k \in \dN$, since
$t_k \in S(\pi^k)$,
there exists $\xi^k\in[0,1]^{I}$ such that
\begin{equation}
\label{equ:83}
\Delta\pi^k_{t_k}(a)=(1-t_k)\left(\prod_{\{i \colon a^i=Q^i\}}\xi^{k,i}\right)
\left(\prod_{\{i\colon a^i=C^i\}}(1-\xi^{k,i})\right), \ \ \  a\in A^*.
\end{equation}
        We can find a subsequence of $(t_k)$  and $\xi\in[0,1]^I$, such that $\xi^{k,i}\to\xi^i$ for all $i \in I$.
Taking the limit as $k\to\infty$ in Eq.~\eqref{equ:83} we get
\[ \Delta\pi_t(a)=(1-t)\left(\prod_{\{i \colon a^i=Q^i\}}\xi^i\right)
\left(\prod_{\{i\colon a^i=C^i\}}(1-\xi^{i})\right), \ \ \  a\in A^*.\]
This proves that (A.3) holds.
Since $S(\pi)$ is countable,
the existence of the sequences $(t_k)$ and $(\xi^k)$ for every $t \in S(\pi)$ as described in the statement of the proposition follows.

We finally prove that (A.4) holds as well.
Fix $t\in T(\pi) \setminus\{ 1\}$, so that $ \widehat\pi_t=t$.
We have to show that $\dot\pi_t(a)=0$ for every $a\in A^*_{\geq 2}$.
Since $t \in T(\pi)$,
there is a nonincreasing sequence $(t_k)$ that converges to $t$ such that $\widehat\pi_{t_k-} = t_k$ for every $k$.
For the same reason,
for every $\ep > 0$ there is $k_0 \in \dN$ and $\delta > 0$ such that for every $k \geq k_0$
and every $t' \in [t_k,t_k+\delta) \cap S(\pi^k)$ we have $p(\xi^k_{t'}) < \ep$.
Indeed, otherwise there is $\ep > 0$ such that for every $k_0 \in \dN$ and every $\delta > 0$ there is $k \geq k_0$ and
$t' \in [t_k,t_k+\delta) \cap S(\pi^k)$ for which $p(\xi^k_{t'}) \geq \ep$.
But then, letting $k_0$ go to infinity and $\delta$ go to 0,
we deduce that $t \in S(\pi)$ and $p(\xi_t) \geq \ep$, a contradiction.

For every mixed action profile $\xi$ that satisfies $p(\xi) < \ep$,
we have $\xi^i < \ep$ for every $i$,
and therefore
\[ \xi(a) = \left(\prod_{\{ i \colon a^i = Q^i\}} \xi^i\right)\cdot\left(\prod_{\{ i \colon a^i = C^i\}} (1-\xi^i)\right) \leq \frac{\ep}{1-\ep} p(\xi),
\ \ \ \forall a \in A^*_{\geq 2}. \]

We deduce that
for every $\ep > 0$ there is $k_0 \in \dN$ and $\delta > 0$ such that for every $k \geq k_0$
and every $t' \in (t_k,t_k+\delta) \cap  S(\pi^k)$,
we have $\xi_{t'}(a) \leq \frac{\ep}{1-\ep} p(\xi_{t'})$ for every $a \in A^*_{\geq 2}$.
This implies that for every $t' \in (t_k,t_k+\delta) \cap  (T(\pi^k) \cup S(\pi^k))$
\[ \pi^k_{t'-}(a) - \pi^k_{t_k-}(a) \leq (t'-t_k) \frac{\ep}{1-\ep},
 \ \ \ \forall a \in A^*_{\geq 2}, \forall t' \in (t_k,t_k+\delta) \cap (T(\pi^k) \cup S(\pi^k)). \]
Since this inequality holds for every $\ep > 0$,
we deduce that $\dot\pi_t(a)=0$ for every $a\in A^*_{\geq 2}$.
\end{proof}

\subsection{The Payoff Path}

Let $\pi$ be an absorption path.
For every $0 \leq t < 1$ and every $a \in A^*$,
the difference $\pi_{1}(a) - \pi_{t}(a)$ is the probability that the play terminates
by the action profile $a$ in the interval $(t,1]$.
Since the probability of absorption
in $[t,1]$
is $1-\widehat\pi_{t}$, the expected payoff after absorption probability $t$ is given by the formula
\begin{equation}
\label{equ:payoff}
\gamma_{t}(\pi) :=
\left\{ \begin{array}{ll}
\frac{\sum_{a \in A^*} \left(\pi_{1}(a) - \pi_{t}(a)\right) r(a)}{1-\widehat\pi_{t}},& \mbox{ if } \widehat\pi_{t}<1,\\
\vec 0,&\mbox{ if } \widehat\pi_{t}=1.
\end{array}\right.
\end{equation}
We call the function $\gamma(\pi) :[0,1] \to \dR^I$
the \emph{payoff path}.

\begin{remarks}
    \label{remgamma}
    \begin{enumerate}
        \item  Payoff paths take their values in $[-M,M]^I$, where $M=\|r(a)\|_\infty$.
        \item
         Note that $\gamma_{0-}(\pi)=\sum_{a\in\A^*}\pi_1(a)r(a)$ is the expected payoff under $\pi$ in the game.
         The value of $\gamma_t(\pi)$ is irrelevant when $\widehat\pi_{t}=1$, because, in this case, the game is already over at $t$.
        \item
        For every absorbing strategy profile $x$, we have
        \[ \gamma_{t_n-}(\pi^x) = \gamma_n(x), \ \ \ \forall n \in \dN,\]
        where the absorption path $\pi^x$ is defined in Remark~\ref{remark:strategy}, and $t_n = \prob_x(\theta < n)$.
        This equality reflects the equivalence between each strategy profile $x$ and the absorption path $\pi^x$.
\item When $T(\pi)=[0,1]$, the expression for the payoff path simplifies to
            \begin{equation}
            \label{gamma t} \gamma_t(\pi)=\frac{\sum_{i\in I}\left(\pi_1(Q^i,C^{-i})-\pi_t(Q^i,C^{-i})\right)r(Q^i,C^{-i})}{1-t}
            \end{equation}
Then we have for every $0\leq s< t< 1$,
            \[ (1-t)\gamma_t=(1-s)\gamma_s+\sum_{i\in I}(\pi_s(Q^i,C^{-i})-\pi_t(Q^i,C^{-i})r(Q^i,C^{-i}).\]
Hence, the function $t\mapsto\gamma_t$ solves the differential equation
            \begin{equation}
            \label{odegamma}
            (1-t)\dot\gamma_t=\gamma_t-\sum_{i\in I}\dot\pi_t(Q^i,C^{-i})r(Q^i,C^{-i}), \; t\in[0,1).
            \end{equation}
        \item Let $(\pi^k)_{k \in \dN}$ be a sequence of  absorption paths that converges to a limit $\pi$.
        Then,
        \[ \gamma_t(\pi) = \lim_{k \to \infty} \gamma_t(\pi^k),\]
        for all
        $t\in [ 0,1)$
        where $\pi$ is continuous.
    \end{enumerate}
\end{remarks}

We now adapt the definition of sequential $\ep$-perfectness to absorption paths.

\begin{definition}
Let $\ep \geq 0$.
Player~$i$ is \emph{sequentially $\ep$-perfect}
at the absorption path $\pi$ if the following conditions hold:

\begin{enumerate}
\item[(SP.1)] For all $t\in S(\pi)$ such that
$\widehat\pi_{t}<1$,
player~$i$ is $\ep$-perfect at the mixed action profile $\xi_{t}$
in the strategic-form game $G_\Gamma(\gamma_{t}(\pi))$.
\item[(SP.2)] For every $t \in T(\pi) \setminus\{ 1\}$,
\begin{itemize}
    \item[(a)] $\gamma_{t}^i(\pi) \geq r^i(Q^i,C^{-i}) - \ep$, and
    \item[(b)]  if $\dot \pi_t(Q^i,C^{-i}) > 0$,
    then
$\gamma_{t}^i(\pi) \leq r^i(Q^i,C^{-i}) + \ep$.
\end{itemize}
\end{enumerate}
An absorption path $\pi$ is \emph{sequentially $\ep$-perfect}
if all players are sequentially $\ep$-perfect at $\pi$.
\end{definition}

In words, an absorption path is sequentially $\ep$-perfect if
(i) whenever the players play in discrete time ($t \in S(\pi)$),
the mixed action that they play is $\ep$-perfect in the one-shot game induced by the continuation payoff,
and (ii) whenever the players play in continuous time ($t \in T(\pi)$),
it cannot be that by quitting a player will gain more than $\ep$, and
a player does not quit with positive rate if her continuation payoff is higher by more than $\ep$ than
her payoff if she quits alone.

It follows by the definition of $\pi^x$ (see Remark~\ref{remark:strategy}),
that player~$i$ is sequentially $\ep$-perfect at an absorbing strategy profile $x$,
if and only if she is sequentially $\ep$-perfect at the absorption path $\pi^x$.

We shall see now that standard continuity arguments show that a
limit of sequentially $\ep$-perfect absorption paths as $\ep$ goes to 0 is a sequentially $0$-perfect absorption path.
\begin{proposition}
\label{lemma:0perfect} Let $(\pi^k)_{k \in \dN}$ be a sequence of
absorption paths that converges to a limit $\pi$,
let $(\ep^k)_{k \in \dN}$ be a sequence of non-negative reals that converges to 0,
and let $i \in I$.
If for every $k \in \dN$ player~$i$ is sequentially $\ep^k$-perfect at the absorption path $\pi^k$,
then player~$i$ is sequentially 0-perfect at the absorption path $\pi$.
\end{proposition}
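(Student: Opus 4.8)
The plan is to verify the two defining conditions (SP.1) and (SP.2) for the limit path $\pi$ at $\ep = 0$, treating each $t \in S(\pi)$ and each $t \in T(\pi) \setminus \{1\}$ separately, and in each case pulling back the corresponding property from the approximating paths $\pi^k$ via the continuity facts already established. By Proposition~\ref{AP:compact} we may pass to a subsequence so that, for every $t \in S(\pi)$, there are $t_k \in S(\pi^k)$ and $\xi^k \in [0,1]^I$ with $t_k \to t$, $\xi^k \to \xi_t$, and Eq.~\eqref{deltaxi} holding for $\pi^k$ and $\xi^k$ at $t_k$; this is the scaffolding on which the whole argument rests.

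First I would handle (SP.1). Fix $t \in S(\pi)$ with $\widehat\pi_t < 1$, and take $t_k, \xi^k$ as above. Since $\widehat\pi_t < 1$ and $\widehat\pi^k_{t_k-} \to \widehat\pi_{t-} = t < 1$ (this convergence was established inside the proof of Proposition~\ref{AP:compact}), for $k$ large we have $\widehat\pi^k_{t_k} < 1$, so player~$i$ is $\ep^k$-perfect at $\xi^k$ in $G_\Gamma(\gamma_{t_k}(\pi^k))$. Now $\gamma_{t_k}(\pi^k) \to \gamma_t(\pi)$: one argues that $\gamma_{t_k}(\pi^k) = \gamma_{t_k-}(\pi^k) + (\text{jump term at }t_k)$ — or more directly, since $\widehat\pi_t < 1$ forces $\pi$ to be continuous at all points of a right-neighbourhood of $t$ inside the interval $[t, \widehat\pi_t)$, one can feed Remark~\ref{remgamma}(5) and the convergence $\pi^k_{t_k} \to \pi_t$ into formula~\eqref{equ:payoff}. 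The payoff function $r_\Gamma(y; \cdot)$ depends continuously on $y$, so the games $G_\Gamma(\gamma_{t_k}(\pi^k))$ converge to $G_\Gamma(\gamma_t(\pi))$; applying the continuity remark from Section~\ref{section:perfect} (``$\ep_k$-perfect at $\xi_k$ in $G_k$, $\xi_k \to \xi$, $\ep_k \to 0$, $r_k \to r$ $\Rightarrow$ $0$-perfect at $\xi$ in $G$'') gives that player~$i$ is $0$-perfect at $\xi_t$ in $G_\Gamma(\gamma_t(\pi))$, which is exactly (SP.1).

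Next, (SP.2). Fix $t \in T(\pi) \setminus \{1\}$, so $\widehat\pi_t = t$. Part~(a) — $\gamma^i_t(\pi) \geq r^i(Q^i, C^{-i}) - 0$ — should follow by a limiting argument: using the sequence $t_\ell \searrow t$ with $\widehat\pi_{t_\ell-} = t_\ell$ produced in the proof of Proposition~\ref{AP:compact}, at each $t_\ell$ either $t_\ell \in S(\pi) \cup T(\pi)$; passing through $\pi^k$-points near $t_\ell$ where (SP.1)/(SP.2)(a) for $\pi^k$ yields $\gamma^i(\pi^k) \geq r^i(Q^i,C^{-i}) - \ep^k$, then sending $k \to \infty$ (using $\gamma_{t'}(\pi^k) \to \gamma_{t'}(\pi)$ at continuity points $t'$ of $\pi$, Remark~\ref{remgamma}(5)) and $t_\ell \to t$ (using right-continuity of $\gamma(\pi)$, which one reads off from \eqref{equ:payoff} and the càdlàg property of $\pi$). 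For part~(b), suppose $\dot\pi_t(Q^i,C^{-i}) > 0$; I must produce points near $t$ at which, in $\pi^k$, player~$i$ is quitting with enough probability to invoke the lower-inequality direction of $\ep^k$-perfectness. The issue is that a positive right-derivative of $\pi$ need not be matched by positive jumps (or positive derivative) of $\pi^k$ at nearby points in an obvious pointwise way. The resolution is an averaging/pigeonhole argument: $\pi^k_s(Q^i,C^{-i}) \to \pi_s(Q^i,C^{-i})$ for a.e.\ $s$, so on a small interval $(t, t+\delta)$ where the increment of $\pi_\cdot(Q^i,C^{-i})$ is bounded below, the increments of $\pi^k_\cdot(Q^i,C^{-i})$ are eventually bounded below too; hence for each large $k$ there is a point $s_k \in (t, t+\delta)$ — either in $S(\pi^k)$ with $\xi^{k,i}_{s_k} > 0$, or in $T(\pi^k)$ with $\dot\pi^k_{s_k}(Q^i,C^{-i}) > 0$ — at which the $\ep^k$-perfect lower bound reads $\gamma^i_{s_k}(\pi^k) \leq r^i(Q^i,C^{-i}) + \ep^k$. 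Then send $k \to \infty$ and $\delta \to 0$, using right-continuity of $\gamma(\pi)$, to get $\gamma^i_t(\pi) \leq r^i(Q^i,C^{-i})$.

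I expect this last point — part~(b) of (SP.2), extracting from a positive \emph{right-derivative of the limit} a nearby \emph{witness point of positive quitting in the approximants} — to be the main obstacle, since it is exactly the asymmetry that motivated the $\liminf$ definition of $\dot\pi$ (Remark~\ref{const}(11)); the rest is bookkeeping with the continuity statements in Remark~\ref{remgamma} and Proposition~\ref{AP:compact}. Care is also needed to confirm that the relevant $\gamma_{t'}(\pi^k)$ appearing in these limits are evaluated at $t'$ that are continuity points of $\pi$ (or one-sided limits thereof), so that Remark~\ref{remgamma}(5) applies; this is handled by choosing the witness points $s_k$, or a further subsequence, inside the (co-countable) set of continuity points.
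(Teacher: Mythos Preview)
Your proposal is correct and follows essentially the same route as the paper: invoke Proposition~\ref{AP:compact} to handle (SP.1), and for (SP.2) pass to nearby points of $\pi^k$ lying in $T(\pi^k)\cup S(\pi^k)$, using in the $S(\pi^k)$ case that the corresponding mixed actions satisfy $\xi^k\to\vec 0$ so that $r^i(Q^i,\xi^{k,-i})\to r^i(Q^i,C^{-i})$.

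The one noteworthy difference is (SP.2.b). You correctly flag it as the delicate step and supply the pigeonhole argument (a positive increment of $\pi_\cdot(Q^i,C^{-i})$ over $(t,t+\delta)$ forces, for large $k$, a witness $s_k$ at which player~$i$ quits in $\pi^k$); the paper, by contrast, writes only ``The proof that (SP.2.b) holds with $\ep=0$ is similar,'' leaving that extraction implicit. Conversely, for (SP.2.a) the paper is a bit cleaner: it directly chooses $t_k$ with $\pi^k_{t_k-}\to\pi_t$ and $\widehat\pi^k_{t_k-}=t_k$, rather than first picking an auxiliary sequence $t_\ell$ in $\pi$ and then locating $\pi^k$-points near each $t_\ell$ as you do. Neither difference is substantive.
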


\begin{proof}
Fix $t\in S(\pi)$. We prove that in this case (SP.1) holds with $\ep = 0$.
Since $\pi^k \Rightarrow \pi$, following Proposition \ref{AP:compact}
we can find  a sequence $(t_k)_{k\in\N}$, with $t_k\in S(\pi^k)$ for all $k \in \dN$,
that converges to $t$ and such that $\xi_{t} = \lim_{k \to \infty} \xi^k$,
where $\xi^k$ satisfies Eq.~\eqref{deltaxi} at $t_k$ for $\pi^k$, for all $k \in \dN$.
Remark~\ref{remgamma}(5) implies that $\gamma_{t_k}(\pi^k) \to \gamma_t(\pi)$.
By definition, if player~$i$ is sequentially $\ep^k$-perfect at $\pi^k$,
then she is $\ep^k$-perfect at the mixed action profile $\xi^k$ in the strategic-form game $G_\Gamma(\gamma_{t_k}(\pi^k))$.
As discussed in Section~\ref{section:perfect},
it follows that player~$i$ is 0-perfect at $\xi_{t}$ in the strategic-form game
$G_\Gamma(\gamma_{t}(\pi))$,
i.e., (SP.1) holds with $\ep = 0$.

Now let $t\in T(\pi) \setminus\{ 1\}$.
We will prove that (SP.2.a) holds with $\ep = 0$.
Let $(t_k)$ be a nonincreasing sequence of times converging to $t$, such that $\pi^k_{t_k-}\to\pi_t$.
This implies that $\gamma_t(\pi) = \lim_{k \to \infty} \gamma_{t_k}(\pi^k)$.
As in the proof of Proposition \ref{AP:compact},
we can choose this sequence in a way that $\widehat\pi^k_{t_k-}=t_k$ for all $k\in\dN$.
Following Remark~\ref{const}(7), this implies that,
for each $k \in \dN$ there are only two possibilities:
either $t_k\in T(\pi^k)$ or $t_k\in S(\pi^k)$.

Suppose first that $t_k\in T(\pi^k)$ for every $k \in \dN$ large enough.
Then (SP.2.a), applied to $\pi^k$, yields
\[ \gamma^i_{t_k}(\pi^k)\geq r^i(Q^i,C^{-i})-\ep^k, \]
 and, letting $k$ go to $+\infty$, we obtain that
(SP.2.a) with $\ep=0$ holds for $\pi$ at $t$.

Next let us suppose the existence of a subsequence of $(\pi^k)$ such that $t_k\in S(\pi^k)$ for every $k \in \dN$.
By assumption we have
\begin{eqnarray}
\label{ek}
r^i(Q^i,\xi^{k,-i}) \leq
\gamma_{t_k}^i(\pi^k)+ \ep^k,
\end{eqnarray}
As in the proof of Proposition \ref{AP:compact}, the sequence $(p(\xi^k_{t_k}))$ vanishes when $k$ tends to $+\infty$, or, equivalenty,  $\xi^k_{t_k}\to\vec 0$.
The result follows by letting $k$ go to  $+\infty$ in Eq.~\eqref{ek}.

The proof that (SP.2.b) holds with $\ep=0$ is similar,
hence (SP.2) holds for every $t \in T(\pi)$ such that $\pi^k_t\to\pi_t$.
For $t$ such that $\pi^k_t$ does not converge to $\pi_t$, (SP.2) holds by the right-continuity of $\pi$.
\end{proof}

The following result relates the concepts of $\ep$-equilibria and
sequential 0-perfect absorption paths.

\begin{theorem}
\label{theorem:6}
Let $\Gamma$ be a quitting game that does not possess an $\ep$-equilibrium
under which the game terminates with probability 1 in the first stage.
The game admits an $\ep$-equilibrium for every $\ep > 0$,
if and only if there is a sequentially 0-perfect absorption path.
\end{theorem}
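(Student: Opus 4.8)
\noindent The two implications require different machinery, so I treat them in turn.

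\emph{Sufficiency of a sequentially 0-perfect path.} Fix a sequentially 0-perfect absorption path $\pi$ and a target $\ep>0$. By Theorem~\ref{theorem:sv} it is enough to produce an absorbing strategy profile at which all players are sequentially $\ep^6$-perfect. The plan is not to build a new profile but to reuse the discretization $x^k$ from the proof of Proposition~\ref{prop:convergence} and to verify, by a perturbation analysis, that $x^k$ is sequentially $\ep^6$-perfect once $k$ is large. Three facts drive this. (i) Eq.~\eqref{equ:801} makes $\pi^{x^k}$ uniformly $O(1/k)$-close to $\pi$, so by Remark~\ref{remgamma}(3),(5) the continuation payoff $\gamma_{n+1}(x^k)$ stays within $O(1/k)$, uniformly in $n$, of the value of $\gamma(\pi)$ associated with $s^k_n$. (ii) At a stage $n$ of type (D.1) one has exactly $x^k_n=\xi_{s^k_n}$, which by (SP.1) is $0$-perfect in $G_\Gamma(\gamma_{s^k_n}(\pi))$; since its support is unchanged and the continuation payoff moves only by $O(1/k)$, the continuity argument recalled in Section~\ref{section:perfect} makes it $\ep^6$-perfect in $G_\Gamma(\gamma_{n+1}(x^k))$ for $k$ large. (iii) At a stage $n$ of type (D.2) one has $p(x^k_n)\le 1/k$, so in $G_\Gamma(\gamma_{n+1}(x^k))$ the payoff of $Q^i$ is $r^i(Q^i,C^{-i})+O(1/k)$ while that of $C^i$ and of $x^k_n$ itself is $\gamma^i_{n+1}(x^k)+O(1/k)$; hence $\ep^6$-perfectness reduces, up to $O(1/k)$, to ``$r^i(Q^i,C^{-i})\le\gamma^i_{s^k_n}(\pi)$ for all $i$'' together with ``$r^i(Q^i,C^{-i})\ge\gamma^i_{s^k_n}(\pi)$ whenever $x^{k,i}_n>0$''. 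The first is (SP.2.a) if $s^k_n\in T(\pi)$ and follows from (SP.1) if $s^k_n\in S(\pi)$, since a jump of $\pi$ inside a (D.2)-interval is small (outside $S_0^k$) and a small $\xi$ has $r^i(Q^i,\xi^{-i})=r^i(Q^i,C^{-i})+o(1)$. For the second, Eq.~\eqref{equ:89} shows $x^{k,i}_n>0$ forces $t\mapsto\pi_t(Q^i,C^{-i})$ to increase on $[s^k_n,s^k_{n+1})$, so there is either a (small) jump $t'$ there with $\xi^i_{t'}>0$, where (SP.1) forces $r^i(Q^i,\xi^{-i}_{t'})=\gamma^i_{t'-}(\pi)$, or a positive-measure set of $t'$ with $\dot\pi_{t'}(Q^i,C^{-i})>0$, where (SP.2.a)--(SP.2.b) force $\gamma^i_{t'}(\pi)=r^i(Q^i,C^{-i})$; either way $r^i(Q^i,C^{-i})=\gamma^i_{s^k_n}(\pi)+o(1)$ because the interval has length $O(1/k)$. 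The bookkeeping to be careful about: the grid must isolate the ``large'' jumps of $S_0^k$ into their own intervals, so that every (D.2)-interval carries only small jumps; the $O(1/k)$ estimates must be uniform in $n$, which Eq.~\eqref{equ:801} delivers; and a region near $t=1$, where $\gamma_t(\pi)$ is irrelevant (Remark~\ref{remgamma}(2)), must be disposed of using right-continuity.

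\emph{Necessity of a sequentially 0-perfect path.} Assume $\Gamma$ has an $\ep$-equilibrium for every $\ep>0$. By Theorem~\ref{theorem:simon} one of (S.1)--(S.3) holds, and (S.2) is excluded by the standing hypothesis. If (S.3) holds, take $\ep_k\downarrow 0$ and absorbing profiles $x^k$ at which all players are sequentially $\ep_k$-perfect; the absorption paths $\pi^{x^k}$ are then sequentially $\ep_k$-perfect, a subsequence converges by Proposition~\ref{AP:compact}, and the limit is sequentially 0-perfect by Proposition~\ref{lemma:0perfect}. If (S.1) holds, pick for each $k$ a stationary $\ep_k$-equilibrium with per-stage action $\xi_k$ and pass to a subsequence with $\xi_k\to\xi_*$. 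Provided the $\xi_k$ can be chosen with $p(\xi_k)>0$, their (geometric) absorption paths converge to $\pi$, the geometric path with action $\xi_*$ if $\xi_*\neq\vec 0$ and the continuous-time path $\pi_t(Q^i,C^{-i})=c_it$ with $c_i=\lim\xi^i_k/p(\xi_k)$ if $\xi_*=\vec 0$; in both cases $\gamma_t(\pi)$ is constant and equal to $\lim\gamma(\xi_k)$ (using Remark~\ref{remgamma}(5), and Eq.~\eqref{gamma t} in the continuous case). One then obtains (SP.1)/(SP.2) for $\pi$ by passing to the limit in the $\ep_k$-equilibrium inequalities for the three deviations ``quit for sure now'', ``never quit'', and ``continue one stage, then resume $\xi_k$'', using that a stationary profile has constant continuation payoff $\gamma(\xi_k)$.

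The residual case is that for all small $\ep$ the unique stationary $\ep$-equilibrium is $\vec C$, equivalently $r^i(Q^i,C^{-i})\le r^i(\vec C)$ for every $i$; here a sequentially 0-perfect path must be exhibited directly. A continuous-time path $\pi_t(Q^i,C^{-i})=c_it$ is sequentially 0-perfect exactly when the probability vector $c$ solves the linear complementarity problem $Mc\ge d$, $c^{\top}(Mc-d)=0$, with $M_{ij}=r^i(Q^j,C^{-j})$ and $d_i=r^i(Q^i,C^{-i})$. I expect \textbf{this degenerate sub-case to be the crux}: one must show that, under the standing hypothesis, either this LCP has a solution in the simplex, or else $\Gamma$ possesses an $\ep$-equilibrium terminating with probability $1$ at the first stage (a variant of alternative (S.2)), contradicting the hypothesis. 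This is precisely the point at which the $Q$-matrix / linear-complementarity tools advertised in the introduction are brought to bear; modulo it, the theorem follows.
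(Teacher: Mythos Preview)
Your architecture matches the paper's on both directions: sufficiency via the discretization $x^k$ of Proposition~\ref{prop:convergence} combined with Theorem~\ref{theorem:sv}, and necessity via Theorem~\ref{theorem:simon} combined with Proposition~\ref{lemma:0perfect}. The paper's own proof is extremely terse---one sentence for each direction---so your stage-by-stage verification of sequential $\ep^6$-perfectness in cases (D.1)/(D.2), driven by the uniform estimate Eq.~\eqref{equ:801}, is a correct fleshing-out of what the paper leaves implicit.

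Where you diverge is in the handling of alternative (S.1), and specifically the ``residual case'' where the only stationary $\ep$-equilibrium is $\vec C$. You conjecture that this is where the $Q$-matrix/LCP tools enter and call it ``the crux''. That is a misreading of the paper's structure: the LCP machinery appears only in the proof of Theorem~\ref{theorem:continuous} (existence of continuous equilibria under a $Q$-matrix hypothesis), not in Theorem~\ref{theorem:6}. The paper does not treat (S.1) separately at all; it regards the implication ``Theorem~\ref{theorem:simon} $+$ Proposition~\ref{lemma:0perfect} $\Rightarrow$ sequentially $0$-perfect absorption path'' as immediate, with any subtlety about stationary $\ep$-equilibria already absorbed into the cited external results (note that Theorem~\ref{theorem:simon} explicitly bundles Solan--Vieille, Proposition~2.13, into its statement). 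Your own limit argument for absorbing $\xi_k$---passing to $\xi_*$ or to a continuous path with rates $c_i=\lim\xi_k^i/p(\xi_k)$ and reading off (SP.1)/(SP.2) from the deviations ``quit now'', ``never quit'', ``continue once then resume''---is correct and is essentially all that is needed here; the LCP speculation is a detour the paper never takes, and locating the difficulty there misidentifies which theorem the linear-complementarity ideas serve.
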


\begin{proof}
Theorem~\ref{theorem:simon} and Proposition~\ref{lemma:0perfect}
imply that if the game admits an $\ep$-equilibrium for every $\ep > 0$,
then there is a sequentially 0-perfect absorption path.
Regarding the converse implication, let $\pi$ be  a sequentially 0-perfect absorption path.
In the proof of Proposition~\ref{prop:convergence} we constructed
a sequence $(x^k)$ of strategy profiles such that $\pi^{x^k}\to \pi$.
In the notations of the proof of Proposition~\ref{prop:convergence},
$\sup_{n \in \dN}\| \gamma_{s^k_n}(\pi^{x^k}) - \gamma_{s^k_n}(\pi)\|_\infty \to 0$,
which implies that $x^k$ is an $\ep^k$-equilibrium for every $k$, with $\ep^k \to 0$.
\end{proof}

\bigskip

Theorem~\ref{theorem:6} is related to
Gobbino and Simon (2020),
who separated the dynamics of
the sequence $(\gamma_n(x))_{n \in \dN}$, where $x$ is an absorbing sequentially $\ep$-perfect strategy profile,
into ``large'' motion (the discrete part of the absorption path)
and ``small'' motion (the continuous part of the absorption path).

\section{Continuous Equilibria}
\label{section:continuous}

An absorption path $\pi$ is \emph{continuous} if it does not contain discrete-time aspects;
that is, if $T(\pi) = [0,1]$.
When $\pi$ is continuous, $\sum_{a \in A^*_{\geq 2}} \pi_1(a) = 0$,
yet the converse need not hold.
To simplify terminology,
we use the term \emph{continuous equilibria} for sequentially 0-perfect continuous absorption paths.

In this section we provide a sufficient condition for the existence of
a continuous equilibrium.
To present the sufficient condition,
it is convenient to normalize the payoffs and assume w.l.o.g.~that $r^i(Q^i,C^{-i}) = 0$ for each $i \in I$.

\begin{definition}
Let $R$ be an $(n \times n)$-matrix, and let $q \in \dR^n$.
For each $i$, $1 \leq i \leq n$, denote by $R^i$ the $i$'th column of $R$.
The \emph{linear complementarity problem} $\lcp(R,q)$
is the following problem:
\begin{eqnarray}
\nonumber
\hbox{Find}&&w \in \dR^n_{+}, \hbox{ and } z = (z_0,z_1,\cdots,z_n) \in \Delta(\{0,1,\cdots,n\}),\label{lpc}\\
\hbox{such that}
&& w = z_0q + \sum_{i =1}^n z_i R^i,\\
&&z_i = 0 \hbox{ or } w_i = 0, \ \ \ \forall i \in \{1,2,\ldots,n\}.
\nonumber
\end{eqnarray}
\end{definition}

A matrix $R$ is a \emph{$Q$-matrix} if for every $q \in \dR$ the problem $\lcp(R,q)$ has at least one solution.

Let $\Gamma$ be a quitting game,
and denote by $R(\Gamma)$ the $(|I| \times |I|)$ matrix $(r^i(Q^j,C^{-j}))_{i,j \in I}$.
Solan and Solan (2020) proved that if $R(\Gamma)$ is not a $Q$-matrix,
then $\Gamma$ has a stationary 0-equilibrium.
Here we handle the case where $R(\Gamma)$, as well as all its principal minors,
are $Q$-matrices.

\begin{theorem}
\label{theorem:continuous}
If $R(\Gamma)$ and all its principal minors are $Q$-matrices,
then there exists continuous equilibrium.
\end{theorem}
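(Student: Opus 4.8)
The plan is to construct a continuous equilibrium directly as a solution of the differential inclusion that encodes conditions (SP.2.a) and (SP.2.b), using Viability Theory. Recall that for a continuous absorption path $\pi$ (so $T(\pi) = [0,1]$) we have the normalization $r^i(Q^i,C^{-i}) = 0$, and the payoff path satisfies the ODE~\eqref{odegamma}, namely $(1-t)\dot\gamma_t = \gamma_t - \sum_{i \in I}\dot\pi_t(Q^i,C^{-i})r(Q^i,C^{-i})$. Writing $\mu^i_t := \dot\pi_t(Q^i,C^{-i}) \geq 0$ for the instantaneous quitting rate of player~$i$ and $R = R(\Gamma)$ for the quitting matrix, this becomes $(1-t)\dot\gamma_t = \gamma_t - R\mu_t$, while condition (A.4) is automatic. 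The sequential $0$-perfectness conditions (SP.2.a)--(SP.2.b) read: $\gamma^i_t \geq 0$ for all $i$, and $\mu^i_t > 0 \Rightarrow \gamma^i_t \leq 0$; equivalently $\gamma_t \geq 0$ and $\langle \gamma_t, \mu_t\rangle = 0$ — precisely the complementarity conditions. So I would reparametrize time by $s = -\log(1-t)$, turning the ODE into an autonomous-type linear system $\gamma' = \gamma - R\mu$ (derivative in $s$), and seek a trajectory staying in the closed cone $K := \{\gamma \in \dR^I : \gamma \geq 0\}$ forever, with $\mu(s) \geq 0$ and $\langle \gamma(s), \mu(s)\rangle = 0$ at a.e.~$s$, converging to $0$ as $s \to \infty$ (so that $\gamma_1(\pi) = \vec 0$, consistent with~\eqref{equ:payoff}).

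Next I would set up the viability problem. At a boundary point $\gamma \in \partial K$ with active set $J = \{i : \gamma^i = 0\}$, viability of $K$ requires the velocity $\gamma - R\mu$ to point into $K$, i.e.~$(\gamma - R\mu)^i \geq 0$ for $i \in J$ (for $i \notin J$ there is slack); complementarity forces $\supp(\mu) \subseteq J$. Thus on the face $F_J$ we need $\mu_J \geq 0$ with $-(R_{JJ}\mu_J)^i \geq 0$ for $i \in J$ — this is exactly where the hypothesis enters: $R_{JJ}$ being a $Q$-matrix (it is a principal minor of $R(\Gamma)$) guarantees that for the relevant right-hand side the local LCP has a solution, so a viable velocity exists at every boundary point. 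Concretely, I would show that the set-valued map $\gamma \mapsto F(\gamma) := \{\gamma - R\mu : \mu \geq 0,\ \langle\gamma,\mu\rangle = 0\}$ has nonempty convex compact values, is upper semicontinuous (or admits a Marchaud/Lipschitz selection after suitable regularization), and satisfies the tangential condition $F(\gamma) \cap T_K(\gamma) \neq \emptyset$ for every $\gamma \in K$, where $T_K(\gamma)$ is the contingent cone. The Viability Theorem then yields, from any initial condition $\gamma(0) \in K$, an absolutely continuous solution $s \mapsto \gamma(s)$ remaining in $K$ for all $s \geq 0$.

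Then I would recover the absorption path from the viable trajectory: given $\gamma(\cdot)$ and the associated measurable selection $\mu(\cdot) \geq 0$ with $\langle\gamma,\mu\rangle=0$, define $\pi_t(Q^i,C^{-i})$ by integrating $\mu^i$ against the time change back to $t \in [0,1]$, and $\pi_t(a) = 0$ for $a \in A^*_{\geq 2}$; one must check (A.1)--(A.4) and that the resulting $\gamma_t(\pi)$ agrees with $\gamma$, so that (SP.2) holds with $\ep = 0$ and there are no points of $S(\pi)$ to worry about. The initial condition should be chosen so that the trajectory converges to $0$ — I would pick $\gamma(0)$ on a suitable stable manifold, or alternatively run the construction on a truncated horizon $[0, 1-\delta]$ and pass to the limit $\delta \to 0$ using the sequential compactness of $\A$ (Proposition~\ref{AP:compact}) together with the continuity of the payoff path (Remark~\ref{remgamma}(5)), extracting a limiting continuous absorption path that is sequentially $0$-perfect; Proposition~\ref{lemma:0perfect} then finishes the argument. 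The main obstacle I anticipate is verifying the tangential (viability) condition uniformly on all faces $F_J$ simultaneously and in a way compatible with the regularity hypotheses of the Viability Theorem: the LCP selection $\mu = \mu(\gamma)$ need not be continuous or even single-valued, so some care — passing to the convexified differential inclusion, or a careful stratified argument face-by-face with matching at lower-dimensional faces — will be needed to produce a genuine viable trajectory, and the role of "all principal minors are $Q$-matrices" is exactly to make every face contribute an admissible velocity.
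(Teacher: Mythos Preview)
Your overall architecture---recast (SP.2) as a complementarity-constrained differential inclusion for the payoff path, invoke Viability Theory, reconstruct $\pi$, and pass to a limit via compactness of $\A$---is exactly the paper's. But the viability step as written does not go through. First, you omit the constraint $\sum_i \mu^i = 1$: a continuous absorption path has $\widehat\pi_t = t$, hence $\sum_i \dot\pi_t(Q^i,C^{-i}) = 1$ a.e., so the control must lie in $\Delta(I)$. Without this, $\mu = 0$ trivially satisfies both complementarity and viability, the $Q$-matrix hypothesis is never used, and you recover no absorption path. With it, your $F(\gamma)$ is empty at interior points of $\R^I_+$ (complementarity there forces $\mu = 0 \notin \Delta(I)$), so the state is forced onto the boundary $Y := \partial\R^I_+$---which is where the paper works.

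Second, and this is the real obstruction: running forward in $s$ with $\gamma' = \gamma - R\mu$, viability at a face with active set $J$ requires $(R\mu)^i \leq 0$ on $J$. The $Q$-matrix hypothesis delivers the \emph{opposite} sign: the paper's Step~1 solves $\lcp(R_{JJ},\widehat q)$ with $\widehat q \leq 0$ and extracts $z \in \Delta(J)$ with $(Rz)^i \geq 0$ on $J$ (and $=0$ for some $j \in J$). This is not a matter of rewriting: for $R = \bigl(\begin{smallmatrix}0&1\\1&0\end{smallmatrix}\bigr)$, a $Q$-matrix whose principal minors are also $Q$-matrices, no $\mu \in \Delta(\{1,2\})$ satisfies $R\mu \leq 0$, so your forward tangency condition fails at the origin. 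The paper fixes this by \emph{reversing time}: it runs $\dot q = \tfrac{1}{\tau}(Rz - q)$ forward in $\tau \in [1/n,1]$ from an arbitrary $q_0 \in Y$ and identifies $\gamma_t(\pi) \approx q(1-t)$; now the tangent-cone condition on $Y$ is precisely $(Rz)_i \geq 0$ on the active set, together with the equality $(Rz)_j = 0$ that pins the trajectory to $Y$. The influence of $q_0$ vanishes as $n \to \infty$, and compactness of $\A$ finishes the argument along the lines you sketch---so your ``stable manifold'' concern evaporates once the time orientation is corrected. As for your worry about regularity of the LCP selection: the set-valued map $q \mapsto F(q)$ on $Y$ is upper semicontinuous with nonempty convex values, which is all Aubin's Viability Theorem needs.
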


\begin{remark}
Theorem~\ref{theorem:continuous} is not tight:
there may be continuous equilibria when its condition is not satisfied.
Indeed, it may be that the restriction of $R(\Gamma)$ to a subset of players satisfies the condition of Theorem~\ref{theorem:continuous},
and therefore there is a continuous equilibrium $\pi$ for the subgame that involves those players
(when all other players are restricted to always continue),
and it may further happen that the other players obtain high payoffs along this absorption path.
In such a case, all players are sequentially 0-perfect at $\pi$

We do not know whether the existence of a continuous equilibrium along which all players quit with positive probability
implies that $R(\Gamma)$ and all its principal minors are $Q$-matrices.
\end{remark}

\begin{proof}[Proof of Theorem~\ref{theorem:continuous}]

\noindent\textbf{Step 1:} Convex combinations in the non-negative orthant.

We will show here that for every nonempty subset $J \subseteq I$ of players
there is a probability distribution $z \in \Delta(J)$ that satisfies
\begin{eqnarray}
\label{equ:131}
\sum_{i \in J} z_i r^j(Q^i,C^{-i}) &\geq& 0, \ \ \ \forall j \in J,\\
\label{equ:132}
\sum_{i\in J}z_ir^j(Q^i,C^{-i}) &=& 0 \hbox{ for at least one }j\in J.
\end{eqnarray}
The assumption that $R = R(\Gamma)$ and all its principal minors are
$Q$-matrices is used only in this step of the proof.

Fix $i_0 \in J$ and
let $\widehat q \in \dR^J$ be the vector that is defined by
\[ \widehat q_{i_0} := -1, \ \ \ \widehat q_i := 0 \ \ \ \forall i \in J \setminus \{i_0\}. \]
The matrix $\widehat R:=(r^i(Q^j,C^{-j}))_{i,j\in J}$ is a principal minor of  $R$.
Therefore, the linear complementarity problem $\lcp(\widehat R,\widehat q)$ has a solution $(\widehat w,\widehat z)$.
Since $\widehat q_{i_0} < 0$, it cannot be that $\widehat z_0 = 1$.
If $i_0$ is the only player $i \in J$ such that $\widehat z_i > 0$,
then, since $r^{i_0}(Q^{i_0},C^{-i_0}) = 0$ and $\widehat q_{i_0} < 0$,
we have $\widehat z_{i_0} = 1$.
Otherwise, there is $i_1 \in J \setminus\{i_0\}$ such that $\widehat z_{i_1} > 0$,
and consequently $\widehat w_{i_1} = 0$.

Define $z_i := \frac{\widehat z_i}{1-\widehat z_0}$ for each $i \in J$.
Since $\widehat w_i \geq 0$ and $\widehat q_i \leq 0$ for every $i \in J$,
and since $\widehat w$ is a convex combination of $\widehat q$ and $\sum_{i \in J} z_i r(Q^i,C^{-i})$,
it follows that Eq.~\eqref{equ:131} holds.
If $z_{i_0} = 1$, then Eq.~\eqref{equ:132} holds with $j=i_0$.
Otherwise,
since $\widehat w_{i_1}=\widehat q_{i_1}=0$, we have
$\sum_{i\in J}z_ir^{i_1}(Q^i,C^{-i})=0$, and Eq.~\eqref{equ:132} holds with $j=i_1$.

\medskip
\noindent\textbf{Step 2:} Viability theory.

For every $z\in\Delta(I)$ denote $z\cdot R:=\sum_{i\in I}z_iR^i$,
and let $Y$ be the boundary of $\R^I_+$.
For every $q\in Y$, set
\[ F(q) := \{ z \in \Delta(I) \colon z_i > 0 \ \ \Rightarrow \ \ q_i = 0, \ \ \
(z \cdot R)_i \geq 0 \hbox{ whenever } q_i = 0\}.
\]
We will show that there exists a measurable function $z : [t_0,1] \to \Delta(I)$
such that for every $t \in [t_0,1]$ we have
(a) $q(t)\in Y$ and
(b) $z(t) \in F(q(t))$.

The set-valued function $F$ is upper semi-continuous with convex values,
and by Step~1 it has nonempty values.
For every $q \in Y$ denote by $T_Y(q)$ the tangent cone at $q$:
\[ T_Y(q):=\{ d\in\R^I, q+\delta d\in Y \mbox{ for all }\delta>0 \mbox{ small}\}.\]
A careful analysis of the tangent cone shows that
$\frac{\delta}t z\cdot R+(1-\frac {\delta}t) q\in T_Y(q)$ for every $z$
satisfying Eqs.~\eqref{equ:131}--\eqref{equ:132} and $\delta>0$ small enough,
where $J = \{i \in I \colon q_i = 0\}$.

Fix $(q_0,t_0)\in Y\times (0,1)$.
For every measurable function $z : [t_0,1] \to \Delta(I)$, consider the following controlled dynamic:
\begin{equation}
\label{qz}
\left\{\begin{array}{l}
\dot q(t)= \frac 1t(z(t)\cdot R - q(t)), \ \ \ \forall t\in[t_0,1],\\
q(t_0)=q_0.
\end{array}\right.
\end{equation}
The set $Y$ is closed,
and the set-valued function $F$ is upper-semicontinuous with nonempty, closed, and convex values.
By the classical Viability Theorem (Aubin, 1991, Theorem 3.3.4) it follows that
there exists a measurable function $z : [t_0,1] \to \Delta(I)$
such that
(a) and (b) above hold
for every $t \in [t_0,1]$.

\medskip
\noindent\textbf{Step 3:} Constructing a continuous equilibrium.

Fix an arbitrary $q_0\in Y$.
For every $n \in \dN$ let $(q^n,z^n)$ be a solution of Eq.~\eqref{qz} with $q^n_0 = q_0$ and $t_0 = \frac{1}{n}$,
such that $q^n(t) \in Y$ and $z^n(t) \in F(q^n(t))$ for every $t \in [\frac{1}{n},1]$.
Define $\pi^{n}\in\A$ by
\begin{equation}
\label{equ:34}
\dot\pi^{n}_t(Q^i,C^{-i})=z^{n}_i(1-t), \ \ \ \forall t\in[0,1-\tfrac{1}{n}), \ \ \ \forall i\in I,
\end{equation}
and an arbitrary continuous evolution on $[1-\frac{1}{n},1]$.
By definition, $\pi^n$ is a continuous absorption path.
Eq.~\eqref{qz} implies that, for all $0 \leq t \leq 1 - \frac{1}{n}$,
\[
(1-t)q^n(1-t)-\frac 1nq_0
=\int_{\frac 1n}^{1-t}z^n(s)\rmd s\cdot R=\int_{t}^{1-\frac 1n}z^n(1-s)\rmd s\cdot R.\]
In addition, for every $t \in [0,1-\tfrac{1}{n}]$,
\begin{eqnarray*}
\gamma_{t}(\pi^n)
&=& \frac{1}{1-t} \int_t^1 z^n(1-s) \rmd s \cdot R\\
&=& \frac{1}{1-t} \int_t^{1-1/n} z^n(1-s) \rmd s \cdot R + \frac{1}{1-t} \int_{1-1/n}^1 z^n(1-s) \rmd s \cdot R\\
&=& q^n(1-t) - \frac{q_0}{(1-t)n}  + \frac{1}{1-t} \int_{1-1/n}^1 z^n(1-s) \rmd s \cdot R.
\end{eqnarray*}
It follows that
\[ \| \gamma_{t}(\pi^n) - q^n(1-t)\|_\infty \leq \frac{2\|R\|_\infty}{(1-t)n}, \ \ \ \forall n \in \dN, \forall t \in [0,1-\tfrac{1}{n}]. \]
Let $\pi$ be an accumulation point of $(\pi^n)$, and assume w.l.o.g.~that $\pi^n \Rightarrow \pi$.
Since $\pi^n$ is continuous, so is $\pi$.
Consequently, for every $t \in [0,1)$ the limit $\lim_{n \to \infty} q^n(1-t)$ exists and is equal to $\gamma_t(\pi)$.
Since $q^n(1-t) \in Y$ for every $t \in [0,\frac{1}{n}]$,
we deduce that $\gamma_t(\pi) \in Y$ for every $t \in [0,1)$, and therefore (SP.2.a) with $\ep =0$ holds for each $i \in I$.

We turn to prove that (SP.2.b) holds as well.
Fix $i \in I$ and let $t \in [0,1)$ be such that $\dot\pi_t(Q^i,C^{-i}) > 0$.
Then there exists a sequence $(t_n)_{n \in \dN}$ such that $\lim_{n \to \infty} t_n = t$ and $\dot \pi^n_{t_n}(Q^i,C^{-i}) > 0$
for every $n$ sufficiently large.
This implies that  for every $n$ sufficiently large we have $z^n_i(1-t_n) > 0$,
and therefore $q^n_i(1-t_n) = 0$.
By taking the limit as $n$ goes to infinity we deduce that $\gamma^i_t(\pi) = 0$,
and (SP.2.b) indeed holds.

Since Condition~(SP.2) holds for $\pi$, and since $i$ is arbitrary,
$\pi$ is sequentially 0-perfect.
\end{proof}

\bigskip

When $\pi$ is a continuous equilibrium,
we can assign to each $t \in [0,1)$ the set of players who quit with positive rate at $t$.
In the next two examples,
$[0,1)$ is divided into countably many intervals, and
a single player quits with positive rate in each interval.
We therefore describe $\pi$ by a list of pairs $(i_k,p_k)_k$, where $i_k$ is a player and $p_k \in (0,1]$:
under $\pi$, player~$i_0$ quits in the interval $[0,p_0)$,
player~$i_1$ quits in the interval $[p_0,p_0 + (1-p_0)p_1)$,
and so on.
Thus, $(i_k)_k$ indicates the order by which the players quit,
and $p_k$ indicates the probability by which player~$i_k$ quits
in the $k$'th interval,
given that the game
did not terminate before.
Since the play eventually absorbs, $\sum_k p_k = \infty$,
yet it might be that the index set of $k$ is not $\dN$, as happens in Example~\ref{example:five} below.

\begin{example}
\label{example:ftv:0}
Suppose that there are three players, $r(Q^1,C^2,C^3) = (0,2,-1)$, $r(C^1,Q^2,C^3) = (-1,0,2)$, and $r(C^1,C^2,Q^3) = (2,-1,0)$.
Games that have these payoffs was studied by Flesch, Thuijsman, and Vrieze (1997)
and Solan (2003).
The corresponding matrix $R$ and all its principle minors are $Q$-matrices, hence a
continuous equilibrium exists.
One such equilibrium
is the one were the sequence $(i_k,p_k)_k$ is:
\begin{equation}
\label{equ:891}
(1,\frac{1}{2}), (2,\frac{1}{2}), (3,\frac{1}{2}), (1,\frac{1}{2}), (2,\frac{1}{2}), (3,\frac{1}{2}), (1,\frac{1}{2}), (2,\frac{1}{2}), (3,\frac{1}{2}), \ldots.
\end{equation}

\bigskip
In fact, it can be shown that all
continuous equilibria in this example
can be obtained from the one in Eq.~\eqref{equ:891} by starting the period at any
$t \in [0,\frac{7}{8}]$
(instead of at $t=0$).
\end{example}

The following example shows that continuous equilibria
even when periodic,
may exhibit a wild behavior.

\begin{example}
\label{example:five}
Suppose that there are five players,
$r(Q^1,C^2,C^3,C^4,C^5) = (0,2,-\frac{1}{2},1,-1)$,
$r(C^1,Q^2,C^3,C^4,C^5) = (-\frac{1}{2},0,2,1,-1)$,
$r(C^1,C^2,Q^3,C^4,C^5) = (2,-\frac{1}{2},0,1,-1)$,
$r(C^1,C^2,C^3,Q^4,C^5) = (-1,-2,-3,0,\frac{10}{7})$,
and $r(C^1,C^2,C^3,C^4,Q^5) = (2,\frac{7}{2},\frac{47}{8},\frac{5}{2},0)$.
It is a bit tedious but not difficult to show that the corresponding matrix $R$ and all its principle minors are $Q$-matrices,
and therefore a
continuous equilibrium exists.

In this example there are many periodic
continuous equilibria $(i_k,p_k)_k$.
In fact, for every $l \in \dN$ there is
such an equilibrium
 with period $3l+2$,
where the sequence $(i_k)_{k=1}^{3l+2}$ is
$(1,2,3,1,2,3,\ldots,1,2,3,4,5)$.

Yet there is also a
continuous equilibrium that has this structure for $l=\infty$:
\[ (1,\frac{1}{4}), (2,\frac{1}{6}), (3,\frac{1}{20}), (1,\frac{1}{76}), (2,\frac{1}{300}), (3,\frac{1}{598}), \ldots, (4,\frac{1}{2}), (5,\frac{1}{2}). \]
We do not know whether there exist games where there is a
continuous equilibrium but none that is periodic with a finite period.
An algorithm for calculating the
union of the range of all payoff paths that correspond to continuous equilibria
is described in
Ashkenazi-Golan, Krasikov, Rainer, and Solan (2020).
\end{example}

\section{Discussion}
\label{section:discussion}

The behavior of players in dynamic games in general, and quitting games in particular, may be complex.
It might be that in some stage, the players mix their actions,
knowing that the set of players who will terminate the game will be random.
It might also happen that some player wants to quit,
but she wants to guarantee that no other player knows when she quits,
to avoid the outcome where she quits with someone else.
While in discrete time a player cannot guarantee that no other player will be able to quit with her,
in continuous time this can be done.
Equilibrium behavior in quitting games may exhibit both types of behavior:
periods of discrete-time behavior, when players quit with positive probability,
and periods of continuous-time behavior, when players quit at a given rate.

The concepts of discrete-time strategies and continuous-time strategies can capture only one of the
two possible behaviors described above.
In this paper we introduced an alternative representation of strategy profiles in quitting games,
called absorption paths,
which allows to describe both behaviors.
Though it is not known whether all quitting games have $\ep$-equilibria,
we showed that \emph{if} an $\ep$-equilibrium exists for every $\ep > 0$,
\emph{then} there exists a sequentially 0-perfect absorption path.
This result shows that,
the reason for having games that possess $\ep$-equilibria for every $\ep > 0$ but no 0-equilibria,
is that the nature of discrete time does not allow players to completely
hide the stage in which they quit,
thereby allowing other players to quit simultaneously with them
(albeit with small probability) and make a low profit.

The space of absorption paths $\A$ is compact,
and the function that assigns to every absorption path its payoff path is continuous.
It is not difficult to show that $\A$ is contractible.
We do not know whether these properties can be used to prove the existence
of an $\ep$-equilibrium in some family of quitting games.

\end{document}